\renewcommand\@biblabel[1]{#1.} 
\newcommand{\Z}{\mathbb{Z}}
\newcommand{\PP}{\mathbb{P}}
\newcommand{\Bc}{\mathcal{B}}
\newcommand{\point}{\mathfrak{p}}
\newcommand{\RI}{\mathcal{R_I}}
\newcommand{\SI}{\mathcal{S_I}}
\newcommand{\m}{\mathfrak{m}}
\newcommand{\Y}{\mathcal{Y}}
\newcommand{\K}{\mathcal{K}}
\newcommand{\ZC}{\mathcal{Z}}
\newcommand{\Ker}{{\operatorname{Ker}}}
\newcommand{\Supp}{{\operatorname{Supp}}}
\newcommand{\sat}{{\operatorname{sat}}}
\newcommand{\reg}{{\operatorname{reg}}}
\newcommand{\indeg}{{\operatorname{indeg}}}
\newcommand{\Hom}{{\operatorname{Homgr}}}
\newcommand{\codim}{{\operatorname{codim}}}
\newcommand{\Proj}{{\operatorname{Proj}}}
\newcommand{\Sym}{{\operatorname{Sym}}}
\newcommand{\Rees}{{\operatorname{Rees}}}
\newcommand{\length}{{\operatorname{length}}}
\newcommand{\pmt}[1]{\begin{pmatrix}#1\end{pmatrix}}
\def\ff{{\bf f}} 
\numberwithin{equation}{section} 
\newtheorem{pro}{Proposition}[section]
\newtheorem{Lem}[pro]{Lemma}
\newtheorem{Cor}[pro]{Corollary} 
\newtheorem{Theo}[pro]{Theorem}
\newtheorem*{Theoetoile}{Theorem} 
\newtheorem*{propositionetoile}{Proposition} 
\theoremstyle{definition}
\newtheorem{Exem}[pro]{Example}
\newtheorem{Rem}[pro]{Remark} 
\def\ff{{\mathbf{f}}}
\begin{document}
\date{}

\title{\Large \bf Fibers of rational maps and  Rees algebras of their base ideals}
\author[1]{Tran Quang Hoa \thanks{Corresponding Author: tranquanghoa@hueuni.edu.vn}}
\author[2]{ Ho Vu Ngoc Phuong \thanks{hvnphuong@husc.edu.vn}}
\affil[1]{\small University of Education, Hue University,  34 Le Loi St., Hue, Vietnam.}
\affil[2]{\small University of Sciences, Hue University,  77 Nguyen Hue St., Hue, Vietnam.}

\maketitle

\begin{abstract}
We consider a rational map $\phi: \PP_k^{m} \dashrightarrow \PP_k^n$ that is a parameterization of an $m$-dimensional variety. Our main goal  is to study  the $(m-1)$-dimensional fibers of $\phi$ in relation to the $m$-th local cohomology modules of the Rees algebra of its base ideal.\\
 {{\footnotesize  \textsc{Keywords}:  Approximation complexes, base ideals, fibers of rational maps, parameterizations, Rees algebras}}
\end{abstract}

\section{Introduction}
Let $k$ be a field and $\phi: \PP_k^m\dashrightarrow \PP_k^n$ be a rational map. Such a map $\phi$ is defined by homogeneous polynomials $f_0,\ldots,f_n,$ of the same degree $d,$ in a standard graded polynomial  ring $R=k[X_0,\ldots,X_m],$ such that $\gcd(f_0,\ldots,f_n)=1.$ The ideal $I$ of $ R$ generated by these polynomials is called the \textit{base ideal of $\phi$}.  The scheme $\Bc:=\Proj(R/I)\subset \PP_k^m $ is  called the \textit{base locus of $\phi$}. Let $B=k[T_0,\ldots,T_n]$ be the homogeneous coordinate ring of $\PP_k^n.$ The map $\phi$ corresponds to the $k$-algebra homomorphism $\varphi: B\longrightarrow R,$ which sends each $T_i$ to $f_i.$ Then, the kernel of this homomorphism defines the closed image $\mathscr{S}$  of $\phi.$ In other words,  after degree renormalization, $k[f_0,\ldots,f_n]\simeq B/\Ker(\varphi)$ is the homogeneous coordinate ring of $\mathscr{S}.$  The minimal set of generators of $\Ker(\varphi)$ is called its \textit{implicit equations} and the \textit{implicitization problem} is to find these implicit equations.

The implicitization problem  has been of increasing interest to commutative algebraists and algebraic geometers due to its applications in Computer Aided Geometric Design as explained by Cox \cite{Cox05}. 

We blow up the base locus of $\phi$ and obtain the following commutative diagram
\begin{displaymath}
\xymatrix{ \Gamma \ar@{^{(}->}[rr]\ar[d]_{\pi_1}&& \PP_k^m\times \PP_k^n \ar[d]^{\pi_2} \\ \PP_k^m \ar@{-->}[rr]^{\phi} &&\PP_k^n.     }
\end{displaymath}
The variety $\Gamma$ is the blow-up of $\PP_k^m$ along $\Bc$, and it is also the Zariski closure of the graph of $\phi$ in $\PP^m_k\times \PP_k^n.$ Moreover, $\Gamma$ is the geometric version of the Rees algebra $\RI$ of $I,$ i.e., $\Proj(\RI)=\Gamma.$ As $\RI$ is the graded domain defining $\Gamma,$ the projection  $\pi_2(\Gamma)=\mathscr{S}$  is defined by the graded domain $\RI\cap k[T_0,\ldots,T_n]$, and we can thus obtain the implicit equations of $\mathscr{S}$  from the defining equations of $\RI.$

Besides the computation of implicit representations of parameterizations, in geometric modeling  it is of vital importance to have a detailed knowledge of the geometry of the object and of the parametric representation one is working with.  The question of how many times is the same point being painted (i.e., corresponds to distinct values of parameter) depends  not only on the variety itself but also on the parameterization.  It is of  interest for applications to determine the singularities of the parameterizations. The main goal of this paper is to study  the fibers of parameterizations in relation to the Rees and symmetric algebras of their base ideals.  More precisely, we set
$$\pi:={\pi_2}_{\mid \Gamma}: \Gamma \longrightarrow \PP_k^n.$$ 
For every closed point $y\in \PP_k^n,$ we will denote  its residue field by $k(y)$. If $k$ is assumed to be  algebraically closed, then $k(y)\simeq k.$ The fiber of $\pi$  at  $y\in \PP_k^n$ is the subscheme
\begin{align*}
\pi^{-1}(y)=\Proj(\RI\otimes_B k(y)) \subset \PP_{k(y)}^m\simeq \PP_k^m.
\end{align*}
Suppose that  $m\geq 2$ and $\phi$ is generically finite onto its image.  Then, the set
$$\Y_{m-1}=\{y\in \PP_k^n \mid \dim  \pi^{-1}(y)=m-1\}$$ 
consists of only  a finite number of points in $\PP_k^n.$  For each $y\in \Y_{m-1}$, the fiber of $\pi$ at $y$ is an $(m-1)$-dimensional subscheme of $\PP_k^m$ and thus the unmixed component of maximal dimension is defined by a homogeneous polynomial $h_y\in R.$ One of the interesting problems is to establish an upper bound for $\sum_{y\in \Y_{m-1}}\deg(h_y)$ in terms of $d$. This problem was studied in \cite{CCT17, QHTran17}.

The paper is  organized as follows. In Section 2, we study the structure of $\Y_{m-1}.$ Some results in this section were proved in \cite{CCT17}. The main result of this section is Theorem~\ref{Theorem1.3} that gives an upper bound for $\sum_{y\in \Y_{m-1}}\deg(h_y)$ by the initial degree of certain symbolic powers of its base ideal. This is a  generalization of \cite[Proposition 1]{QHTran17} where the first author only proved this result for parameterizations of surfaces $\phi: \PP_k^2 \dashrightarrow \PP_k^3$ under the assumption that the base locus $\Bc$ is locally a complete intersection. More precisely, we have the following.
\begin{Theoetoile}
If there exists an integer $s$ such that $\nu=\indeg((I^s)^\sat)<sd$, then
$$\sum_{y\in \Y_{m-1}}\deg(h_y)\leq \nu<sd.$$
In particular, if $\indeg (I^\sat)< d$, then $\sum_{y\in \Y_{m-1}}\deg(h_y)<d.$
\end{Theoetoile}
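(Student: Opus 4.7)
The plan is to prove the stronger divisibility statement that every nonzero form $F \in (I^s)^{\sat}$ of degree $\nu < sd$ is divisible by $h := \prod_{y \in \Y_{m-1}} h_y$. Applied to an $F$ of minimal degree $\nu = \indeg((I^s)^{\sat})$, this immediately yields $\sum_y \deg h_y = \deg h \le \nu$, and the last sentence of the statement is recovered as the case $s = 1$.

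\textbf{Reduction and fiber data at a single $y$.} Fix $y = [a_0:\cdots:a_n] \in \Y_{m-1}$. A linear change of homogeneous coordinates on $\PP^n_k$ replaces the tuple $(f_0,\ldots,f_n)$ by an equivalent tuple of the same degree $d$ generating the same ideal $I$, and preserves the gcd condition; I will assume $y = [1:0:\cdots:0]$. Because $\phi$ is generically finite, the base locus $\Bc$ has codimension at least $2$ in $\PP^m_k$, so the pure $(m-1)$-dimensional part of the fiber $\pi^{-1}(y)$ coincides with that of $V(f_1,\ldots,f_n)$ (which set-theoretically equals $\phi^{-1}(y) \cup \Bc$). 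A local computation in the UFD $R$ identifies this unmixed component, with its scheme-theoretic multiplicities, as the divisor of $\gcd(f_1,\ldots,f_n)$; consequently $h_y = \gcd(f_1,\ldots,f_n)$, so $f_i = h_y g_i$ for each $i \ge 1$, and $\gcd(f_0,\ldots,f_n) = 1$ forces $\gcd(h_y, f_0) = 1$.

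\textbf{The key divisibility.} Let $p$ be any irreducible factor of $h_y$, with multiplicity $a_p$. Reducing modulo $p^{a_p}$ gives $f_i \equiv 0$ for $i \ge 1$ and hence $I^s \equiv (f_0^s) \pmod{p^{a_p}}$. Choosing $k$ with $\m^k F \subset I^s$ (which exists since $F \in (I^s)^{\sat}$), the image $\bar F \in R/(p^{a_p})$ lies in the $\m$-saturation of $(\bar f_0^s)$. The crucial point is that this saturation equals $(\bar f_0^s)$ itself. Indeed, $R/(p^{a_p})$ is Cohen--Macaulay of dimension $m$ (a hypersurface in $R$), and because $\gcd(p,f_0)=1$ in the UFD $R$, the element $f_0^s$ remains a non-zero-divisor modulo $p^{a_p}$. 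Thus $R/(p^{a_p}, f_0^s)$ is Cohen--Macaulay of dimension $m-1 \ge 1$, so $H^0_\m(R/(p^{a_p}, f_0^s)) = 0$ and the saturation coincides with $(\bar f_0^s)$. Writing $\bar F = \bar c\, \bar f_0^s$ and comparing degrees $\deg F = \nu < sd = \deg f_0^s$ forces $\bar c = 0$, so $p^{a_p} \mid F$. Ranging over all irreducible factors of $h_y$ gives $h_y \mid F$.

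\textbf{Assembly and main obstacle.} For distinct $y, y' \in \Y_{m-1}$, any common irreducible factor of $h_y$ and $h_{y'}$ would define an $(m-1)$-dimensional subvariety contracted simultaneously to both $y$ and $y'$, which is impossible; hence $h_y$ and $h_{y'}$ are coprime. The per-fiber divisibilities therefore combine into $h = \prod_y h_y \mid F$, and the bound $\sum_y \deg h_y \le \nu$ follows. The most delicate step, which I would treat carefully by leaning on the structural results of Section~2 (and \cite{CCT17}), is the identification $h_y = \gcd(f_1,\ldots,f_n)$ with the correct scheme-theoretic multiplicities after the coordinate change---i.e., the precise description of the pure $(m-1)$-dimensional part of the fiber scheme $\pi^{-1}(y)$.
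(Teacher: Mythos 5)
Your proposal is correct, and its overall architecture is the same as the paper's: establish $h_y\mid F$ for each $y\in\Y_{m-1}$ and every nonzero $F\in(I^s)^\sat$ of degree $\nu<sd$, note that the $h_y$ are pairwise coprime, and conclude by a degree count. The input you both need from the fiber geometry is the same, namely Lemma~\ref{Lemma1.2} (quoted from \cite{CCT17}): after normalizing $y=[1:0:\cdots:0]$ one has $h_y=\gcd(f_1,\ldots,f_n)$, $f_i=h_yg_i$ for $i\ge 1$, and $\gcd(h_y,f_0)=1$; you rightly flag this identification (with its scheme-theoretic multiplicities) as the delicate point and defer to Section~2, which is exactly what the paper does, so you are on equal footing there. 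Where you genuinely diverge is in the mechanism for the key divisibility. The paper uses the containment $I^\sat\subset(f,h_y)$ to get $(I^s)^\sat\subset((f,h_y)^s)^\sat=(f,h_y)^s$, invoking the classical fact (Zariski--Samuel) that powers of a complete intersection are unmixed, hence saturated; a homogeneous expansion of $F$ in $(f^s,f^{s-1}h_y,\ldots,h_y^s)$ then kills the $f^s$-term by degree reasons. You instead work one irreducible factor $p^{a_p}$ of $h_y$ at a time: modulo $p^{a_p}$ the ideal $I^s$ becomes $(\bar f_0^s)$, and you show the $\m$-saturation of $(\bar f_0^s)$ in the hypersurface ring $R/(p^{a_p})$ is trivial because $R/(p^{a_p},f_0^s)$ is Cohen--Macaulay of dimension $m-1\ge 1$, hence has vanishing $H^0_\m$. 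Both arguments are sound; the paper's is shorter because it leans on a citable unmixedness theorem and handles all of $h_y$ at once (and picks a single $f$ working for all finitely many $y$ simultaneously, which you do not need since you treat each $y$ separately), while yours is more self-contained, replacing the Zariski--Samuel citation by an elementary depth computation. One cosmetic inaccuracy: the codimension $\ge 2$ of $\Bc$ comes from $\gcd(f_0,\ldots,f_n)=1$, not from generic finiteness of $\phi$, and the fiber $\pi^{-1}(y)$ is cut out by the (possibly larger) specialized Rees ideal rather than by $(f_1,\ldots,f_n)$ itself --- but since $\gcd(h_y,f_0)=1$ forces every first syzygy coordinate to be divisible by $h_y$, the unmixed parts agree and your conclusion stands.
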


In Section 3, we study the part of graded $m$ in $X_i$ of the $m$-th local cohomology modules of the Rees algebra with respect to the homogeneous maximal ideal $\m=(X_0,\ldots,X_m)$
$$N=H_\m^m(\RI)_{(-m,\ast)}= \oplus_{s\geq 0}H_\m^m(I^s)_{sd-m}.$$
The main result  of this section is the following. 
\begin{Theoetoile} [Theorem~\ref{Theorem3.2}]
We have that $N$ is a finitely generated $B$-module satisfying
\begin{enumerate}
\item [\rm (i)] $\Supp_B(N)=\Y_{m-1}$ and $\dim (N)=1.$
\item [\rm (ii)] $\deg (N)=\sum_{y\in \Y_{m-1}}\binom{\deg(h_y)+m-1}{m}.$
\end{enumerate}
\end{Theoetoile}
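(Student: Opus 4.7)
The plan is to decompose the argument into three main steps: a reformulation of $N_s$, the identification of the support and dimension, and the computation of the degree. First, I would apply $H_\m^\bullet$ to the exact sequence $0 \to I^s \to R \to R/I^s \to 0$. Since $R$ is Cohen--Macaulay of dimension $m+1$, $H_\m^i(R)=0$ for all $i \leq m$, which yields $N_s = H_\m^m(I^s)_{sd-m} \cong H_\m^{m-1}(R/I^s)_{sd-m}$. The $B$-action on $N$ is inherited from the bigraded $\RI$-module structure on $H_\m^m(\RI)$: since $\m$ is generated by elements of $B$-degree $0$, the \v{C}ech complex on $X_0,\ldots,X_m$ preserves the bigrading, and taking the $R$-degree slice $(-m,*)$ gives a graded $B$-module. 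Finite generation of $N$ over $B$ then follows from a Serre-vanishing argument, using that $\RI$ is a finitely generated $B$-algebra.

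To identify the support, I fix a closed point $y \in \PP_k^n$ with corresponding homogeneous prime $\point_y \subset B$. The \v{C}ech complex computing $H_\m^m$ consists of $B$-flat modules, so $H_\m^m$ commutes with the flat localization $B \to B_{\point_y}$; combined with a base-change argument along $B \to k(y)$, this reduces the question to whether $H_\m^m(\RI \otimes_B k(y))$ vanishes, which occurs exactly when the fiber ring has Krull dimension less than $m$ as a graded $R$-algebra, i.e.\ when $y \notin \Y_{m-1}$. Conversely, at $y \in \Y_{m-1}$, the hypersurface component $V(h_y)$ of the fiber contributes nontrivially to $H_\m^m$, giving $(N)_{\point_y} \neq 0$. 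Therefore $\Supp_B(N) = \Y_{m-1}$, and the finiteness of $\Y_{m-1}$ established in Section~2 yields $\dim N = 1$.

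For the degree, since $\dim N = 1$, one has $\deg N = \sum_{y \in \Y_{m-1}} \length_{B_{\point_y}}\!\bigl((N)_{(\point_y)}\bigr)$. I would identify each local length with $\dim_k H_\m^m(R/(h_y))_{-m}$. Using the exact sequence $0 \to R(-d_y) \xrightarrow{\,h_y\,} R \to R/(h_y) \to 0$ with $d_y := \deg(h_y)$ together with $H_\m^m(R) = 0$ and the standard formula $\dim_k H_\m^{m+1}(R)_j = \binom{-j-1}{m}$ for $j \leq -m-1$, one computes
$$\dim_k H_\m^m(R/(h_y))_{-m} = \dim_k H_\m^{m+1}(R)_{-m-d_y} = \binom{d_y + m - 1}{m}.$$
Summing over $y \in \Y_{m-1}$ gives the desired formula.

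The main obstacle will be the identification of the local length of $N$ at $y \in \Y_{m-1}$ with $\dim_k H_\m^m(R/(h_y))_{-m}$. The fiber ring $\RI \otimes_B k(y)$ and $R/(h_y)$ agree only up to lower-dimensional and embedded components, and one must argue that these do not contribute to the specific $R$-degree $sd-m$ asymptotically as $s \to \infty$. Handling this carefully will require an analysis of the associated primes of the fiber ring together with a refined control of how the $B$-grading filters the top local cohomology module, so as to isolate the contribution of the top-dimensional unmixed component defined by $h_y$.
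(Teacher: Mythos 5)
Your outline follows essentially the same route as the paper: base change along $B\to k(y)$ to reduce the support question to the (non)vanishing of $H_\m^m(\RI\otimes_B k(y))$, the associativity formula for $\deg(N)$ over the finitely many one-dimensional primes of $\Supp_B(N)$ (each with $\deg(B/\point)=1$), and the computation $\dim_k H_\m^m(R/(h_y))_{-m}=\binom{\deg(h_y)+m-1}{m}$ from the sequence $0\to R(-d_y)\xrightarrow{h_y} R\to R/(h_y)\to 0$. All of this matches the proofs of Proposition~\ref{Proposition3.1} and Theorem~\ref{Theorem3.2}, including the finiteness of $\Y_{m-1}$ from Section~2 to get $\dim N=1$.

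The step you explicitly leave open --- identifying the local contribution at $y\in\Y_{m-1}$ with $\dim_k H_\m^m(R/(h_y))_{-m}$ --- is indeed the crux, but your proposed remedy (controlling associated primes ``asymptotically as $s\to\infty$'' and filtering the $B$-grading) points in the wrong direction: there is no asymptotic issue. The fiber ring $\RI\otimes_B k(y)$ is a single $R$-graded ring $R/J$ with $J$ saturated, whose top-dimensional unmixed part is $(h_y)$, so that $(h_y)/J$ has dimension at most $m-1$. The long exact sequence of local cohomology applied to $0\to (h_y)/J\to R/J\to R/(h_y)\to 0$ kills both end terms in cohomological degrees $m$ and $m+1$ and gives $H_\m^m(\RI\otimes_B k(y))\simeq H_\m^m(R/(h_y))$ outright; this is the paper's isomorphism~\eqref{equation3.1}, and it is all that is needed. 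The point that actually deserves the care you are reserving for unmixedness is the one you pass over with ``a base-change argument'': relating $\length_{B_\point}(N_\point)$ to $\dim_{k}$ of $H_\m^m(\RI\otimes_B k(\point))$ requires commuting $H_\m^m$ with the non-flat specialization $B_\point\to k(\point)$ (and knowing $\point N_\point=0$, since $H_\m^m$ is not the top local cohomology of $\RI$ here). A complete write-up should justify that exchange; once it is in place, the rest of your computation goes through exactly as in the paper.
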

In the last section, we treat the case of parameterization $\phi: \PP_k^2 \dashrightarrow \PP_k^3$ of surfaces. We establish a bound for the Castelnuovo-Mumford regularity and the degree of the $B$-module 
$$N=\oplus_{s\geq 0}H_\m^2(I^s)_{sd-2},$$
see Corollary~\ref{Cor4.3} and Proposition~\ref{Lemma4.2}.
\begin{propositionetoile}
Assume $\Bc=\Proj(R/I)$ is locally a complete intersection.  Then 
$$\reg(N)\leq n\quad \text{and}\quad \deg(N)\leq \binom{n+2}{3},$$
where $n=\dim_k H^1_\m (R/I)_{d-2}$. Moreover, if $\indeg(I^\sat)=d$, then 
$$d\leq n\leq \dfrac{d(d-3)}{2}+3.$$
\end{propositionetoile}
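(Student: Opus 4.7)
The plan is to combine the structure of $N$ as a $1$-dimensional $B$-module from Theorem~\ref{Theorem3.2} with the theory of approximation complexes (made available by the LCI hypothesis on $\Bc$) and with sheaf-cohomological computations on $\PP_k^2$. The starting observation, underlying everything, is that $R=k[X_0,X_1,X_2]$ has depth $3$, so $H^i_\m(R)=0$ for $i\le 2$. Applied to $0\to I^s\to R\to R/I^s\to 0$ this yields $H^2_\m(I^s)\cong H^1_\m(R/I^s)$, and therefore
\[
N_s \;=\; H^1_\m(R/I^s)_{sd-2},\qquad N_0=0,\qquad \dim_k N_1 = n.
\]

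To establish $\reg(N)\le n$, I would use the approximation complex (the $\mathcal{Z}$-complex) of $(f_0,\ldots,f_3)$. Under the LCI hypothesis, results of Herzog--Simis--Vasconcelos, further developed by Bus\'e--Chardin--Jouanolou, ensure that this complex is acyclic and resolves $\Sym(I)$, with the defect between $\Sym(I)$ and $\RI$ controlled by torsion supported on $\Bc$. Reading off the internal degree $sd-2$ and taking $\m$-adic local cohomology translates the complex into a graded $B$-linear presentation of $N$ whose shifts are controlled by $n$. The regularity bound then follows by careful bookkeeping of these shifts. This is what I expect to be the main obstacle: the spectral-sequence bookkeeping through the approximation complex under the LCI hypothesis requires delicate tracking of the bigraded shifts and the passage from the resolution of $\Sym(I)$ to $\RI$.

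For the degree bound, once $\reg(N)\le n$ is known, I combine the structural formula $\deg(N)=\sum_{y\in \Y_1}\binom{\deg(h_y)+1}{2}$ from Theorem~\ref{Theorem3.2} with a bound $\sum_{y\in \Y_1}\deg(h_y)\le n$ which I expect to extract from the same approximation-complex analysis (in the spirit of Theorem~\ref{Theorem1.3}). By convexity of $a\mapsto\binom{a+1}{2}$,
\[
\deg(N) \;=\; \sum_{y\in\Y_1}\binom{\deg(h_y)+1}{2} \;\le\; \binom{\sum_{y\in\Y_1}\deg(h_y)+1}{2} \;\le\; \binom{n+1}{2} \;\le\; \binom{n+2}{3},
\]
the last inequality holding for $n\ge 1$.

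For the moreover part, I compute $n$ via sheaf cohomology on $\PP_k^2$. The short exact sequence $0\to\mathcal{I}_\Bc(d-2)\to\mathcal{O}_{\PP^2}(d-2)\to\mathcal{O}_\Bc(d-2)\to 0$, together with $H^0(\mathcal{I}_\Bc(d-2))=(I^\sat)_{d-2}=0$ (from $\indeg(I^\sat)=d$) and $H^1(\mathcal{O}_{\PP^2}(d-2))=0$, gives $n=\text{length}(\Bc)-\binom{d}{2}$. The lower bound $n\ge d$ reduces to $\text{length}(\Bc)\ge\binom{d+1}{2}$, which holds because the LCI hypothesis makes $R/I^\sat$ a $1$-dimensional Cohen--Macaulay module whose Hilbert function is non-decreasing and agrees with that of $R$ in degrees $\le d-1$; in particular $h_\Bc(d-1)=\binom{d+1}{2}$. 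The upper bound $n\le d(d-3)/2+3$, equivalent to $\text{length}(\Bc)\le d^2-2d+3$, follows from $\dim_k(I^\sat)_d\ge 4$ (arising from the linearly independent generators $f_0,\ldots,f_3$) combined with Macaulay's growth bound applied iteratively to the Hilbert function of $\Bc$ from degree $d$ onward; this last estimate also requires some care but is of a different nature from the approximation-complex argument.
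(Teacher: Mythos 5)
Your setup is the right one and coincides with the paper's: under the LCI hypothesis the approximation complex $\mathcal{Z}_\bullet$ resolves $\SI$, one has $H^i_\m(\SI)\simeq H^i_\m(\RI)$ for $i\geq 1$, and the two spectral sequences of the double complex $C^\bullet_\m(\mathcal{Z}_\bullet)$ taken in bidegree $(-2,\ast)$ produce a finite free presentation
\[
B(-2)^m\longrightarrow B(-1)^n\longrightarrow N\longrightarrow 0,\qquad n=\dim_k H^1_\m(R/I)_{d-2},
\]
which is exactly Proposition~\ref{Theorem4.1}. But both of your concluding inequalities have genuine gaps. First, $\reg(N)\leq n$ does \emph{not} follow from ``careful bookkeeping of the shifts'' in this presentation: a graded module generated in degree $1$ with relations in degree $2$ need not have regularity bounded by its number of generators. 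The paper's essential input is that $\dim(N)=1$, hence $\codim(N)=3$ in $B=k[T_0,\ldots,T_3]$, and then the bounds of Chardin--Fall--Nagel \cite[Corollaries 2.4 and 3.4]{CFN08} for modules of prescribed codimension admitting a free presentation with given twists deliver both $\reg(N)\leq n$ and $\deg(N)\leq\binom{n+2}{3}$ in one stroke. This codimension-sensitive regularity/degree bound is the missing idea; without it neither inequality is established.

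Second, your alternative route to the degree bound hinges on the inequality $\sum_{y\in\Y_1}\deg(h_y)\leq n$, which you only ``expect to extract'' from the approximation complex. Theorem~\ref{Theorem1.3} bounds $\sum_{y}\deg(h_y)$ by $\indeg((I^s)^\sat)$, not by $\dim_k H^1_\m(R/I)_{d-2}$, and no such inequality is proved (or needed) in the paper; your convexity step is correct but conditional on this unproven claim, so the degree bound remains open in your argument. For the ``moreover'' part the paper simply quotes \cite{QHTran17}: your sketch --- computing $n=\deg(\Bc)-\binom{d}{2}$ from the ideal-sheaf sequence using $(I^\sat)_{d-2}=0$, getting the lower bound from the nondecreasing Hilbert function of the one-dimensional Cohen--Macaulay ring $R/I^\sat$, and the upper bound from $\dim_k(I^\sat)_d\geq 4$ together with Macaulay growth --- is the standard outline, but the estimate $\deg(\Bc)\leq d^2-2d+3$ is itself a nontrivial Hilbert-function argument that you have not actually carried out.
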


\section{Fibers of rational maps $\phi: \PP_k^m \dashrightarrow \PP_k^n$ }
Let $n\geq m\geq 2$ be integers and $R=k[X_0,\ldots,X_m]$ be the standard graded polynomial ring over an algebraically closed field $k$. Denote  the homogeneous maximal ideal of $R$ by $\m=(X_0,\ldots,X_m)$. Suppose we are given an integer $d\geq 1$ and $n+1$ homogeneous polynomials $f_0,\ldots,f_n\in R_d,$ not all zero. We may further assume that $\gcd(f_0,\ldots,f_n)=1,$ replacing the $f_i's$ by their quotient by the  greatest common divisor of $f_0,\ldots,f_n$ if needed; hence, the ideal $I$ of $R$ generated by these polynomials is of codimension at least two. Set $\Bc:=\Proj(R/I)\subseteq \PP_k^m:=\Proj(R)$ and consider the rational map
\begin{align*}
\phi:\quad &\PP_k^m  -\dashrightarrow \PP_k^n\\
& \; x \longmapsto (f_0(x):\cdots: f_n(x))
\end{align*}
whose closed image is the subvariety $\mathscr{S}$ in $\PP_k^n.$ In this paper, we always assume that $\phi$ is generically finite onto its image, or equivalently that the closed image $\mathscr{S}$ is of dimension $m.$ In this case, we say that $\phi$ is a \textit{parameterization} of the $m$-dimensional variety $\mathscr{S}$.

Let $\Gamma_0\subset \PP_k^m\times \PP_k^n$ be the graph of $\phi: \PP_k^m\setminus\Bc \longrightarrow \PP_k^n$ and $\Gamma$ be the Zariski closure of $\Gamma_0$. We have the following diagram
$$ \xymatrix@1{\Gamma\ \ar[d]_{\pi_1}\ar@{^(->}[r] & \PP_k^m\times \PP_k^n \ar[d]^{\pi_2}\\  \PP^m_k \ar@{-->}[r]_\phi& \PP^n_k}	$$
where the maps $\pi_1$ and $\pi_2$ are the canonical projections. One has
$$\mathscr{S}=\overline{\pi_2(\Gamma_0)}=\pi_2(\Gamma),$$
where the bar denotes the Zariski closure. Furthermore, $\Gamma$ is the irreducible subscheme of $\PP_k^m\times \PP_k^n$ defined by the Rees algebra 
$$\RI:=\Rees_R(I)=\oplus_{s\geq 0} I^s.$$
Denote the homogeneous coordinate ring of $\PP_k^n$ by $B:=k[T_0,\ldots,T_n]$. Set 
$$S:=R\otimes_k B= R[T_0,\ldots,T_n]$$
with the grading $\deg(X_i)=(1,0)$ and $\deg(T_j)=(0,1)$ for all $i=0,\ldots,m$ and $j=0,\ldots,n$. The natural bi-graded morphism of $k$-algebras
\begin{align*}
\alpha:\quad & S \longrightarrow  \RI=\oplus_{s\geq 0} I(d)^s=\oplus_{s\geq 0} I^s(sd)\\
&  T_i \longmapsto f_i
\end{align*}
is onto and corresponds to the embedding $\Gamma \subset \PP_k^m\times \PP_k^n$.

Let $\mathfrak{P}$ be the kernel of $\alpha$. Then, it is a bi-homogeneous ideal of $S$, and the part of degree one of  $\mathfrak{P}$ in $T_i$, denoted by  $\mathfrak{P}_1=\mathfrak{P}_{(\ast, 1)},$  is the module of syzygies of the  $I$
$$a_0T_0 +\cdots+a_nT_n \in \mathfrak{P}_1\Longleftrightarrow a_0f_0+\cdots+a_nf_n=0.$$
Set $\SI:=\Sym_R(I)$ for the symmetric algebra of $I$. The  natural bi-graded epimorphisms
\begin{align*}
S\longrightarrow S/(\mathfrak{P}_1)\simeq \SI \text{\quad and\quad} \delta:\SI\simeq S/(\mathfrak{P}_1)\longrightarrow S/\mathfrak{P}\simeq \RI
\end{align*}
correspond to the embeddings of schemes $ \Gamma \subset V\subset \PP_k^m\times \PP_k^n,$ where $V$ is the projective scheme defined by $\SI$.

Let $\K$ be the kernel of $\delta,$ one has the following exact sequence 
$$0\longrightarrow \K\longrightarrow \SI\longrightarrow \RI\longrightarrow 0.$$
Notice that the module $\K$ is supported in $\Bc$ because $I$ is locally trivial outside $\Bc$.

As the construction of symmetric and Rees algebras commutes with
localization, and both algebras are the quotient of a polynomial extension of the
base ring by the Koszul syzygies on a minimal set of generators in the case of a
complete intersection ideal, it follows that $\Gamma$ and $V$ coincide on $(\PP_k^m\setminus X) \times \PP_k^n,$ where $X$ is the (possibly empty) set of points where $\Bc$ is not locally a complete intersection.

Now we set $\pi:={\pi_2}_{\mid \Gamma}: \Gamma \longrightarrow \PP_k^n.$ For every closed point $y\in \PP_k^n,$ we will denote its residue field by $k(y)$, that is, $k(y)=B_\point/\point B_\point,$ where $\point$ is the defining prime ideal of $y.$ As $k$ is algebraically closed, $k(y)\simeq k.$ The fiber of $\pi$  at  $y\in \PP_k^n$ is the subscheme
\begin{align*}
\pi^{-1}(y)=\Proj(\RI\otimes_B k(y)) \subset \PP_{k(y)}^m\simeq \PP_k^m.
\end{align*}
Let $0\leq \ell\leq m,$ we define 
$$\Y_\ell=\{y\in \PP_k^n \mid \dim  \pi^{-1}(y)=\ell\}\subset \PP_k^n.$$
Our goal is to study the structure of $\Y_\ell.$ Firstly,  we have the following.
\begin{Lem}{\rm \cite[Lemma 3.1]{CCT17}} \label{Lemma2.2.1}
Let $\phi: \PP_k^m\dashrightarrow \PP_k^n$ be a parameterization of $m$-dimensional variety and $\Gamma$ be the closure of the graph of $\phi.$ Consider  the projection $\pi: \Gamma\longrightarrow \PP_k^n$. Then
$$\dim \ \overline{\Y_\ell}+\ell \leq m.$$
Furthermore, this inequality is strict for any $\ell > 0.$ As a consequence, $\pi$ has no $m$-dimensional fibers and only has a finite number of $(m-1)$-dimensional fibers.
\end{Lem}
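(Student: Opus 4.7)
The plan is to apply Chevalley's upper semicontinuity and fiber dimension theorems to the proper morphism $\pi:\Gamma\to\PP_k^n$. Since $\pi_1$ realizes $\Gamma$ as the closure of the graph of $\phi$, $\Gamma$ is irreducible of dimension $m$; and $\phi$ being generically finite onto its image means $\mathscr{S}=\pi(\Gamma)$ has dimension $m$ while a generic fiber of $\pi$ is zero-dimensional.

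I would first note that $\Y_{\geq k}:=\{y\in\PP_k^n\mid\dim\pi^{-1}(y)\geq k\}$ is closed by upper semicontinuity, so $\Y_\ell=\Y_{\geq\ell}\setminus\Y_{\geq\ell+1}$ is locally closed and therefore open dense in its closure $\overline{\Y_\ell}$. Fix an irreducible component $V$ of $\overline{\Y_\ell}$ with $\dim V=\dim\overline{\Y_\ell}$; then $V\cap\Y_\ell$ is open dense in $V$, and $\dim\pi^{-1}(y)=\ell$ on this dense set.

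Now consider the closed subset $\pi^{-1}(V)\subseteq\Gamma$ and its irreducible components $W_1,\ldots,W_r$. For $y$ in a suitable open dense subset $U\subseteq V\cap\Y_\ell$, only the dominating components (those $W_i$ with $\pi(W_i)=V$) meet $\pi^{-1}(y)$, and each such $W_i$ contributes a piece of dimension $\dim W_i-\dim V$ by the fiber dimension theorem. Since $\dim\pi^{-1}(y)=\ell$ on $U$, the maximum $\max_i(\dim W_i-\dim V)$ over dominating components equals $\ell$, so some dominating component $W_{i_0}$ satisfies $\dim W_{i_0}=\dim V+\ell$. Combined with $W_{i_0}\subseteq\Gamma$, this yields $\dim\overline{\Y_\ell}+\ell=\dim V+\ell\leq m$.

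For the strict inequality when $\ell\geq 1$, I would argue by contradiction. If equality held, then $\dim W_{i_0}=m=\dim\Gamma$, and irreducibility of $\Gamma$ would force $W_{i_0}=\Gamma$. Consequently $V=\pi(W_{i_0})=\pi(\Gamma)=\mathscr{S}$ and $\overline{\Y_\ell}=\mathscr{S}$, so $\Y_\ell$ would be dense in $\mathscr{S}$; but then the generic fiber of $\pi$ would be $\ell$-dimensional with $\ell\geq 1$, contradicting the generic finiteness of $\phi$. The two consequences then follow immediately: $\ell=m$ gives $\dim\overline{\Y_m}<0$, i.e.\ $\Y_m=\emptyset$, and $\ell=m-1$ gives $\dim\overline{\Y_{m-1}}<1$, so $\Y_{m-1}$ is finite. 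The main subtle point is the treatment of the possibly reducible $\pi^{-1}(V)$, which is handled cleanly by working with dominating components and reading off the relative dimension at the generic point of $V$.
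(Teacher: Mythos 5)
Your argument is correct. Note that the paper does not prove this lemma at all --- it is quoted verbatim from \cite[Lemma 3.1]{CCT17} --- so there is no in-text proof to compare against; your route (upper semicontinuity of fiber dimension for the proper map $\pi$, the generic fiber dimension theorem applied to the components of $\pi^{-1}(V)$ dominating a maximal-dimensional component $V$ of $\overline{\Y_\ell}$, and irreducibility of $\Gamma$ together with generic finiteness to force strictness for $\ell>0$) is the standard one and all the steps, including the reduction to dominating components over a dense open subset of $V$, are sound.
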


The fibers of $\pi$ are defined by the specialization of the Rees algebra. However, Rees algebras are difficult to study. Fortunately, the symmetric algebra of $I$ is easier to understand than $\RI$, and the fibers of $\pi$ are closely related to the fibers of $$\pi^\prime:={\pi_2}_{\mid V}: V \longrightarrow \PP_k^n.$$
Recall that  for any  closed point $y\in \PP_k^n,$ the fiber of $\pi^\prime$ at $y$ is the subscheme
\begin{align*}
{\pi^\prime}^{-1}(y)=\Proj(\SI\otimes_B k(y))\subset \PP_{k(y)}^m\simeq \PP_k^m.
\end{align*}  

The next result gives a relation between fibers of $\pi$ and $\pi^\prime$ -- recall that $X$ is the (possible empty) set of points where $\Bc$ is not locally a complete intersection. 

\begin{Lem}{\rm \cite[Lemma 3.2]{CCT17}} \label{Lemma2.2.2}
The fibers of $\pi$ and $\pi^\prime$ agree outside $X,$ hence  they have the same $(m-1)$-dimensional fibers.
\end{Lem}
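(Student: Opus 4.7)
The plan is to exploit the short exact sequence
$$0\longrightarrow \K \longrightarrow \SI \longrightarrow \RI \longrightarrow 0$$
together with the support property of $\K$ already sketched in the discussion preceding the statement. First, I would verify that $\K$ is supported on $X$ as an $R$-module: for any prime $\qq\subset R$ corresponding to a point of $\PP_k^m\setminus X$, the localized ideal $I_\qq$ is either the unit ideal (when $\qq\notin \Bc$) or is generated by a regular sequence (when $\qq\in \Bc\setminus X$); since the formation of symmetric and Rees algebras commutes with localization, in both cases $(\SI)_\qq\cong (\RI)_\qq$, and hence $\K_\qq=0$.

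Next, I would fix a closed point $y\in\PP_k^n$ with defining prime $\point\subset B$ and tensor the above sequence over $B$ with $k(y)$ to obtain the right exact sequence
$$\K\otimes_B k(y)\longrightarrow \SI\otimes_B k(y)\longrightarrow \RI\otimes_B k(y)\longrightarrow 0$$
of graded $R$-modules. Applying $\Proj$, which turns graded surjections into closed immersions, yields a closed embedding $\pi^{-1}(y)\hookrightarrow {\pi^\prime}^{-1}(y)$ inside $\PP_k^m$ cut out by the image of $\K\otimes_B k(y)$. Since $\Supp_R(\K\otimes_B k(y))\subseteq \Supp_R(\K)\subseteq X$, the two fibers coincide as subschemes on the open set $\PP_k^m\setminus X$.

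For the second assertion, I would use that $\dim X\leq \dim \Bc\leq m-2$ because $\hei(I)\geq 2$; in particular, no $(m-1)$-dimensional component of either fiber can be contained in $X$. If $y\in \Y_{m-1}$ for $\pi$, the inclusion $\pi^{-1}(y)\subseteq {\pi^\prime}^{-1}(y)$ forces $\dim {\pi^\prime}^{-1}(y)\geq m-1$, and dimension $m$ is ruled out because then the dense open $\PP_k^m\setminus \pi^{-1}(y)$, of dimension $m$, would lie in $X$. Conversely, any $(m-1)$-dimensional component $Z$ of ${\pi^\prime}^{-1}(y)$ must meet $\PP_k^m\setminus X$, and by the previous step $Z\cap(\PP_k^m\setminus X)$ is then an open subset of $\pi^{-1}(y)$, contributing an $(m-1)$-dimensional component to it.

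I expect the main technical subtlety to be verifying that the two fibers agree \emph{scheme-theoretically} outside $X$, not merely set-theoretically. This comes down to checking that the cokernel-formation, the base change $-\otimes_B k(y)$, and the $\Proj$ functor interact compatibly so that the ideal defining $\pi^{-1}(y)$ inside ${\pi^\prime}^{-1}(y)$ is precisely the image of $\K\otimes_B k(y)$, together with the localization argument that produces $\K_\qq=0$ for $\qq$ outside $X$.
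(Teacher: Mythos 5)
Your proof is correct, and it is essentially the argument the paper relies on: the lemma is quoted from \cite[Lemma 3.2]{CCT17} without proof, but the paragraph preceding it (symmetric and Rees algebras commute with localization and coincide for complete intersection ideals, hence $\Gamma$ and $V$ agree on $(\PP_k^m\setminus X)\times\PP_k^n$) is exactly the first half of your argument, and your dimension count $\dim X\leq\dim\Bc\leq m-2$ (from $\hei(I)\geq 2$) is the standard way to pass from agreement outside $X$ to equality of the sets of $(m-1)$-dimensional fibers.
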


The next result is a generalization of \cite[Lemma 10]{BBC14} that  gives the structure of the unmixed part of a $(m-1)$-dimensional fiber of $\pi.$ Note that our result does not need the assumption that $\Bc$ is locally a complete intersection as in \cite{BBC14}, thanks to Lemma~\ref{Lemma2.2.2}. Recall that the saturation of an ideal $J$ of $R$ is defined  by $J^\sat:=J \colon_R (\m)^\infty$. 
\begin{Lem} \cite[Lemma 3.3]{CCT17}\label{Lemma1.2}
Assume $y= (p_0: \cdots: p_n)\in \Y_{m-1}$ such that $p_i=1$. Then the unmixed part of the fiber $\pi^{-1}(y)$ is defined by 
$$h_y= \gcd(f_0-p_0f_i,\ldots,f_n-p_nf_i).$$
Furthermore, if  $f_j-p_jf_i=h_y g_j$ for all $j\neq i$, then $$ I=(f_i)+h_y(g_0,\ldots,g_{i-1},g_{i+1},\ldots,g_n)\quad \text{and}\quad I^\sat\subset (f_i,h_y).$$ 
\end{Lem}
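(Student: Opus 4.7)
The plan is to identify $h_y$ as the equation of the codimension-$1$ unmixed part of the fiber, then to deduce both the decomposition of $I$ and the saturation containment by elementary manipulations.

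First I would handle the identification of $h_y$. Since $y\in\Y_{m-1}$, the fiber $\pi^{-1}(y)$ has dimension $m-1$, so its unmixed maximal-dimensional component is a Weil divisor of $\PP_k^m$ cut out by a single homogeneous polynomial. By Lemma~\ref{Lemma2.2.2}, the fibers of $\pi$ and $\pi^\prime$ agree outside the locus $X\subset\Bc$ where $\Bc$ fails to be locally a complete intersection, and $X$ has codimension $\ge 2$ in $\PP_k^m$; consequently the divisorial part of $\pi^{-1}(y)$ coincides with that of ${\pi^\prime}^{-1}(y)=\Proj(\SI\otimes_B k(y))$. After the substitution $T_j\mapsto p_jT_i$ the Koszul-type linear syzygies $f_kT_\ell-f_\ell T_k\in\mathfrak{P}_1$ yield the elements $f_j-p_jf_i$ (for $j\neq i$) in the defining ideal of ${\pi^\prime}^{-1}(y)$ in $R$. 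Because $R$ is a UFD, the codimension-$1$ primary part of the ideal $J=(f_j-p_jf_i:j\neq i)$ is principal, generated by $\gcd(f_0-p_0f_i,\ldots,f_n-p_nf_i)=h_y$ with the correct multiplicities (each irreducible factor appears with its minimum exponent among the generators). Any extra relations coming from non-Koszul syzygies of $I$ can only enlarge the defining ideal in codimension $\ge 2$, and therefore leave the divisorial part unchanged.

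Next, given the factorizations $f_j-p_jf_i=h_yg_j$ for $j\neq i$, writing $f_j=p_jf_i+h_yg_j$ immediately yields
\[
I=(f_0,\ldots,f_n)=(f_i)+h_y(g_0,\ldots,g_{i-1},g_{i+1},\ldots,g_n),
\]
and in particular $I\subset (f_i,h_y)$.

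For the saturation containment the crucial observation is that $h_y$ and $f_i$ form a regular sequence in $R$: indeed $h_y\nmid f_i$, since otherwise $h_y$ would divide every $f_j=p_jf_i+h_yg_j$ and hence divide $\gcd(f_0,\ldots,f_n)=1$, forcing $h_y$ to be a unit, which contradicts $\dim\pi^{-1}(y)=m-1$. As $R$ is a UFD, coprime nonzero homogeneous elements form a regular sequence, so $(f_i,h_y)$ is a codimension-$2$ complete intersection, hence Cohen--Macaulay and unmixed. Since $\m$ has height $m+1\ge 3$, it is not associated to $R/(f_i,h_y)$, so $(f_i,h_y)$ is $\m$-saturated and
\[
I^\sat\subset (f_i,h_y)^\sat=(f_i,h_y).
\]
The main obstacle is the first step: pinning down that the codimension-$1$ part of $\pi^{-1}(y)$ is cut out by $h_y$ with the correct multiplicities, rather than by some proper divisor of $h_y$. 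The argument above handles this by reducing to the symmetric algebra via Lemma~\ref{Lemma2.2.2} (which is what removes the locally-complete-intersection hypothesis of \cite{BBC14}) and then using that the Koszul syzygies suffice to control the defining ideal of the fiber up to codimension $\ge 2$.
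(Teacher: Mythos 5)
The paper itself offers no proof of this lemma (it is quoted from \cite[Lemma 3.3]{CCT17}), so I am judging your argument on its own merits. Your second and third steps are essentially correct: the identity $f_j=p_jf_i+h_yg_j$ does give $I=(f_i)+h_y(g_0,\ldots,\widehat{g_i},\ldots,g_n)$, and the route ``$(f_i,h_y)$ is a height-two complete intersection, hence unmixed, hence saturated, so $I^{\sat}\subset(f_i,h_y)^{\sat}=(f_i,h_y)$'' is exactly the right one (it is also how the paper uses this containment in the proof of Theorem~\ref{Theorem1.3}). One small repair there: for $(f_i,h_y)$ to be a regular sequence you need $\gcd(f_i,h_y)=1$, not merely $h_y\nmid f_i$; but the same argument applied to an irreducible common factor $q$ (then $q\mid f_i$ and $q\mid h_y\mid f_j-p_jf_i$ force $q\mid f_j$ for all $j$) gives the full coprimality.

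The genuine gap is in your first step, and it sits exactly at what you yourself flag as ``the main obstacle.'' The defining ideal of ${\pi^\prime}^{-1}(y)$ in $R$ is $J_1=\{\,a_0p_0+\cdots+a_np_n \mid a_0f_0+\cdots+a_nf_n=0\,\}$, i.e.\ the specialization of \emph{all} of $\mathfrak{P}_1$, not just of the Koszul syzygies. You assert that the non-Koszul syzygies ``can only enlarge the defining ideal in codimension $\ge 2$,'' but this is precisely the statement that needs proof: a priori a non-Koszul syzygy could specialize to an element not divisible by $h_y$ and strip irreducible factors (or multiplicities) from the divisorial part, leaving only a proper divisor of $h_y$. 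The missing argument is short but essential: for any syzygy $\sum_j a_jf_j=0$, substituting $f_j=p_jf_i+h_yg_j$ yields
\begin{equation*}
\Bigl(\sum_j a_jp_j\Bigr)f_i=-h_y\sum_{j\neq i}a_jg_j\in(h_y),
\end{equation*}
and since $\gcd(f_i,h_y)=1$ (by the coprimality above, which ultimately rests on $\gcd(f_0,\ldots,f_n)=1$) one concludes $h_y\mid\sum_j a_jp_j$. Hence $J_1\subseteq(h_y)$ while $J_1$ contains the elements $h_yg_j$ with $\gcd(g_j\,:\,j\neq i)=1$, so the gcd of $J_1$ is exactly $h_y$ and the divisorial part of ${\pi^\prime}^{-1}(y)$ is $V(h_y)$ with the right multiplicities. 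With that supplied, your reduction from $\pi$ to $\pi^\prime$ via Lemma~\ref{Lemma2.2.2} (the complement of the lci locus has codimension $\ge 2$, so divisorial parts agree) correctly removes the lci hypothesis of \cite{BBC14}, and the rest of the proof stands.
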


\begin{Rem}
The above lemma shows that the $(m-1)$-dimensional fibers of $\pi$ can only occur when $\Bc \neq \emptyset$ as $\Bc \supset V(f_i,h_y).$ It also shows that
$$d\deg(h_y)\leq \deg(\Bc),$$
if there is a $(m-1)$-dimensional fiber with unmixed part given by $h_y.$ As a consequence, $\deg(h_y) <d$ for any $y\in \Y_{m-1}.$
\end{Rem}

 By Lemma~\ref{Lemma2.2.1}, $\pi$ only has a finite number of $(m-1)$-dimensional fibers. The following give an upper bound for this number in terms of the initial degree of certain symbolic powers of its base ideal.  Recall that the initial degree of a graded $R$-module $M$ is defined by
 $$\indeg(M):=\inf\{n\in \Z \mid M_n\neq 0\}$$
 with convention that $\sup \emptyset =+\infty.$ 
\begin{Theo}\label{Theorem1.3}
If there exists an integer $s\geq 1$ such that $\nu=\indeg((I^s)^\sat)<sd$, then
$$\sum_{y\in \Y_{m-1}}\deg(h_y)\leq \nu<sd.$$
In particular, if $\indeg (I^\sat)< d$, then
$$\sum_{y\in \Y_{m-1}}\deg(h_y)<d.$$
\end{Theo}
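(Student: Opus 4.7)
The plan is to establish the following local divisibility: every homogeneous $F \in (I^s)^{\sat}$ of degree strictly less than $sd$ is divisible by $h_y$ for each $y \in \Y_{m-1}$. Applied to a nonzero element of the initial degree $\nu$, together with the pairwise coprimality of the polynomials $h_y$, this immediately yields $\prod_y h_y \mid F$ and hence $\sum_y \deg(h_y) \leq \deg F = \nu$.

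Fix $y = (p_0 : \cdots : p_n) \in \Y_{m-1}$ and choose $i$ with $p_i = 1$. By Lemma~\ref{Lemma1.2}, there is a decomposition
\[
I = (f_i) + h_y \cdot (g_0, \ldots, \widehat{g_i}, \ldots, g_n).
\]
Expanding the $s$-th power via this decomposition, every product of $s$ generators is either $f_i^s$ or carries at least one factor of $h_y$, hence $I^s \subseteq (f_i^s, h_y)$. Next, $\gcd(f_i, h_y) = 1$, because any irreducible common factor would divide each $f_j - p_j f_i$ (by the definition of $h_y$), and hence divide every $f_j$, contradicting $\gcd(f_0, \ldots, f_n) = 1$. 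Therefore $(f_i^s, h_y)$ is a complete intersection of codimension two in the polynomial ring $R$ of dimension $m + 1 \geq 3$; as such it is Cohen--Macaulay and carries no $\m$-primary component, so it is $\m$-saturated. This yields the key inclusion
\[
(I^s)^{\sat} \subseteq (f_i^s, h_y).
\]
Writing any homogeneous $F$ of degree $\nu < sd$ in this ideal as $F = a f_i^s + b h_y$, the identity $\deg(a) = \nu - sd < 0$ forces $a = 0$, so $h_y \mid F$.

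It remains to prove that the family $\{h_y\}_{y \in \Y_{m-1}}$ is pairwise coprime. Suppose an irreducible $p$ divided both $h_{y_1}$ and $h_{y_2}$ for distinct $y_1, y_2$. Then $V(p) \subset \PP_k^m$ is a divisor, and it is not contained in $\Bc$ since $\Bc$ has codimension $\geq 2$ (as $\gcd(f_0, \ldots, f_n) = 1$). For a generic $x \in V(p)$ lying outside $\Bc \cup V(f_{i_1}) \cup V(f_{i_2})$, the relations $h_{y_\ell} \mid (f_j - p_j^{(\ell)} f_{i_\ell})$ give $f_j(x) = p_j^{(\ell)} f_{i_\ell}(x)$ for every $j$, so $\phi(x) = y_\ell$ for both $\ell = 1, 2$, a contradiction. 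Pairwise coprimality therefore implies $\prod_{y \in \Y_{m-1}} h_y \mid F$, and we conclude
\[
\sum_{y \in \Y_{m-1}} \deg(h_y) \leq \deg F = \nu < sd.
\]
The ``in particular'' statement is the case $s = 1$.

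The main obstacle is the pairwise coprimality of the $h_y$'s, which depends on the geometric fact that distinct generic fibers of $\phi$ separate points of $\PP_k^m$ away from the base locus. The commutative-algebraic core --- the Cohen--Macaulayness and saturatedness of $(f_i^s, h_y)$ --- is essentially immediate once the coprimality of $f_i$ and $h_y$ has been established.
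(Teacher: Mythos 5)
Your proof is correct and follows the same overall strategy as the paper's: use Lemma~\ref{Lemma1.2} to trap $(I^s)^\sat$ inside a complete intersection built from $f_i$ and $h_y$, use the degree restriction $\nu<sd$ to force $h_y\mid F$, and conclude via pairwise coprimality of the $h_y$'s. There are two genuine, if modest, differences. First, the paper chooses a single $f$ working for all $y$ and passes through the chain $(I^s)^\sat\subseteq((I^\sat)^s)^\sat\subseteq((f,h_y)^s)^\sat=(f,h_y)^s$, which requires the unmixedness of \emph{powers} of complete intersections (cited from Zariski--Samuel); you instead use the cruder containment $I^s\subseteq(f_i^s,h_y)$ with $i$ depending on $y$, so you only need that an ideal generated by a length-two regular sequence is unmixed, hence saturated. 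This is slightly more elementary and avoids the external citation, at no cost since the degree argument ($\deg a=\nu-sd<0$ forces $a=0$) works equally well in $(f_i^s,h_y)$ as in $(f^s,f^{s-1}h_y,\ldots,h_y^s)$. Second, you supply a proof of the pairwise coprimality $\gcd(h_y,h_{y'})=1$ for $y\neq y'$ --- via the observation that a generic point of a common component would be mapped by $\phi$ to two distinct points --- whereas the paper asserts this without justification. Both variations improve the self-containedness of the argument without changing its substance.
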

\begin{proof}
As $\Y_{m-1}$ is finite, by Lemma~\ref{Lemma1.2}, there exists a homogeneous polynomial $f\in I$ of degree $d$ such that, for any $y\in \Y_{m-1},$
$$I=(f)+h_y (g_{1y}, \ldots,g_{ny}) \quad \text{and}\quad I^\sat \subset (f,h_y)$$
for some $ g_{1y}, \ldots,g_{ny}\in R$.  Since $(f,h_y)$ is a complete intersection ideal, it follows from \cite[Appendix~6, Lemma~5]{Zariski1960} that $(f,h_y)^s$ is unmixed, hence saturated for every integer $s\geq 1$. Therefore, for all $y\in \Y_{m-1}$, 
$$(I^s)^\sat\subset ((I^\sat)^s)^\sat \subset ((f,h_y)^s)^\sat=(f,h_y)^s=(f^s,f^{s-1}h_y,\ldots, h_y^s).$$

Now, let $0\neq F\in (I^s)^\sat$  such that $\deg(F)=\nu<sd$, then $h_y$ is a divisor of $F$. Moreover, if $y\neq y^\prime$ in $\Y_{m-1}$, then $\gcd(h_y,h_{y^\prime})=1.$ We deduce that
$$\prod_{y\in \Y_{m-1}}h_y \mid F$$
which gives
$$\sum_{y\in \Y_{m-1}}\deg(h_y)\leq \deg(F)=\nu<sd.$$
\end{proof}
\begin{Rem}
In the case where  $\phi: \PP_k^2 \dashrightarrow \PP_k^3$ is a parameterization of surfaces. In \cite{QHTran17}, the first author showed  that if $\Bc$ is locally a complete intersection of dimension zero, then 
\begin{displaymath}
\sum_{y\in \Y_1}\deg(h_y)\leq \begin{cases} 
4  & \text{if}  \quad d=3,\\
\left\lfloor \frac{d}{2} \right \rfloor  d-1 &\text{if} \quad d\geq 4.
\end{cases}
\end{displaymath} 
\end{Rem}
\begin{Exem}
Consider the parameterization $\phi: \PP_k^2 \dashrightarrow \PP_k^3$ of surface  given by
\begin{equation*}
\begin{array}{cc}
f_0&= X_0X_1(X_0-X_2)(X_0+X_2)(X_0-2X_2)\\
f_1&=X_0X_1(X_1-X_2)(X_1+X_2)(X_1-2X_2)\\
f_2&=X_0X_2(X_0-X_2)(X_0+X_2)(X_0-2X_2)\\
f_3&=X_1X_2(X_1-X_2)(X_1+X_2)(X_1-2X_2).
\end{array}
\end{equation*}
Using \texttt{Macaulay2} \cite{Macaulay2}, it is easy to see that $I=I^\sat$ and $\indeg((I^2)^\sat) =8<2.5=10.$ Furthermore, $I$ admits a free resolution
\begin{displaymath}
\xymatrix{0\ar[r]& R(-6)^2\oplus R(-8)\ar[r]^{\qquad M}& R(-5)^4\ar[r]& R\ar[r]& R/I\ar[r]&0,}
\end{displaymath} 
where matrix $M$ is given by
$$\pmt{-X_2&0&(X_1-X_2)(X_1+X_2)(X_1-2X_2)\\0&-X_2&-(X_0-X_2)(X_0+X_2)(X_0-2X_2)\\X_1&0&0\\0&X_0&0} .$$
Thus, we obtain $\Y_1=\{\point_1,\point_2,\point_3,\point_4,\point_5,\point_6,\point_7,\point_8\}$	with
\begin{align*}
\begin{array}{llll}
\point_1 =(0:0:0:1)& h_{\point_1}=X_0& \point_2=(0:0:1:0)& h_{\point_2}=X_1\\
\point_3  =(0:1:0:1)& h_{\point_3}  =X_0-X_2& \point_4=(0:-1:0:1)& h_{\point_4}=X_0+X_2\\
\point_5  =(0:2:0:1)& h_{\point_5}  =X_0-2X_2& \point_6=(1:0:1:0)& h_{\point_6}=X_1-X_2\\
\point_7  =(-1:0:1:0)& h_{\point_7}  =X_1+X_2& \point_8=(2:0:1:0)& h_{\point_8}=X_1-2X_2.
\end{array}
\end{align*}
Consequently, we have
$$\sum_{y\in \Y_1}\deg(h_y)=8=\indeg((I^2)^\sat).$$
\end{Exem}
 
\section{Local cohomology of Rees algebras of the base ideal of parameterizations} 
Let $\phi: \PP_k^m\dashrightarrow \PP_k^n$ be a parameterization of $m$-dimensional variety. Let $R=k[X_0,\ldots,X_m]$ and  $B=k[T_0,\ldots,T_n]$ be the homogeneous coordinate ring of $\PP_k^m$ and $\PP_k^n$, respectively. For every closed point $y\in \PP_k^n$, the fiber of $\pi$  at  $y$ is the subscheme
\begin{align*}
\pi^{-1}(y)=\Proj(\RI\otimes_B k(y)) \subset \PP_{k(y)}^m\simeq \PP_k^m
\end{align*}
and  we are interested in studying the set
$$\Y_{m-1}=\{y\in \PP_k^n\, \mid\, \dim \pi^{-1}(y)=m-1\}.$$
We now consider  the $B$-module
 $$M_\mu=H_\m^m(\RI)_{(\mu,\ast)}= \oplus_{s\geq 0}H_\m^m(I^s)_{\mu+sd},$$
 where $\m=(X_0,\ldots,X_m)$ is the homogeneous maximal ideal of $R$.
 By \cite[Theorem 2.1]{Chardin2013}, $M_\mu$ is a finitely generated $B$-module for all $\mu\in \Z$. The following result gives a relation between the support of $M_\mu$ and $\Y_{m-1}.$ For each $y\in \PP_k^n=\Proj(B)$, we can see $y$ as a homogeneous prime ideal of $B.$
\begin{pro} \label{Proposition3.1}
One has
$$ \Supp_B(M_\mu)=\{y\in \Y_{m-1}\mid   \deg(\pi^{-1}(y))\geq \mu+m+1\}.$$
\end{pro}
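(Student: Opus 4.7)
The plan is to translate the support question about $M_\mu$ into a computation of top local cohomology on the scheme-theoretic fiber $\pi^{-1}(y)$, and then read off the answer from its geometry. Since $M_\mu$ is finitely generated over $B$ by \cite[Theorem~2.1]{Chardin2013}, Nakayama's lemma applied to the local ring $(B_\point, \point B_\point)$ reduces ``$y \in \Supp_B(M_\mu)$'' to ``$M_\mu \otimes_B k(y) \neq 0$'', where $\point \subset B$ is the homogeneous defining prime of $y$. The key identification I would then establish is
$$ M_\mu \otimes_B k(y) \;\cong\; H_\m^m\bigl(\RI \otimes_B k(y)\bigr)_\mu. $$
This rests on two standard properties of $H_\m^m$ as a functor on graded $R$-modules: it is right exact (being a cokernel at the top of the \v{C}ech complex) and commutes with arbitrary direct sums. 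From a $B$-presentation $B^{\oplus J} \to B^{\oplus I} \to k(y) \to 0$, tensoring with $\RI$ and then applying $H_\m^m$ yields a right-exact sequence matching the one obtained by tensoring $H_\m^m(\RI)$ directly with $k(y)$; extracting the $R$-degree $\mu$ strand is compatible with both operations.

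Set $A := \RI \otimes_B k(y)$, the coordinate ring of $\pi^{-1}(y) \subset \PP^m_{k(y)}$. Then $\dim A = \dim \pi^{-1}(y) + 1 \leq m$ by Lemma~\ref{Lemma2.2.1}, with equality iff $y \in \Y_{m-1}$. In the strict inequality case Grothendieck vanishing forces $H_\m^m(A) = 0$, so $y \notin \Supp_B(M_\mu)$. When $y \in \Y_{m-1}$, I would pass to sheaf cohomology via the Serre--Grothendieck relation
$$ H_\m^m(A)_\mu \;\cong\; H^{m-1}\bigl(\PP^m_{k(y)}, \widetilde{A}(\mu)\bigr). $$
Because $\widetilde{A}$ is supported on $\pi^{-1}(y)$ and its lower-dimensional components carry no $(m-1)$-st cohomology, only the top-dimensional unmixed part contributes; by Lemma~\ref{Lemma1.2}, this part is the hypersurface $V(h_y)$ of degree $\deg(\pi^{-1}(y))$. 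Combining Serre duality on $\PP^m$ with the hypersurface resolution $0 \to \mathcal{O}_{\PP^m}(-\deg h_y) \to \mathcal{O}_{\PP^m} \to \mathcal{O}_{V(h_y)} \to 0$ then gives
$$ H^{m-1}\bigl(\PP^m, \widetilde{A}(\mu)\bigr) \neq 0 \ \Longleftrightarrow \ \deg(\pi^{-1}(y)) \geq \mu + m + 1. $$

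The most delicate point is this last sheaf-cohomology computation: since $\pi^{-1}(y)$ may be non-reduced, non-Cohen--Macaulay, or possess embedded components, one has to verify carefully that only the top-dimensional unmixed contribution of degree $\deg(\pi^{-1}(y))$ actually appears in $H^{m-1}$, and that this geometric degree is the one matching the proposition. The base-change isomorphism in the first paragraph also warrants care since $k(y)$ is not flat over $B$, but it is rescued by the right-exactness of $H_\m^m$ and its compatibility with direct sums.
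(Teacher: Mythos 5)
Your proposal is correct and follows essentially the same route as the paper: Nakayama reduces support membership to nonvanishing of $M_\mu\otimes_B k(y)$, base change for the top local cohomology gives $M_\mu\otimes_B k(y)\cong H^m_\m(\RI\otimes_B k(y))_\mu$, the lower-dimensional part of the fiber contributes nothing so only the unmixed hypersurface $V(h_y)$ matters, and the condition $\deg(\pi^{-1}(y))\geq\mu+m+1$ falls out of the cohomology of a degree-$\deg(h_y)$ hypersurface in $\PP^m_k$. The only difference is cosmetic: you run the last step through the Serre--Grothendieck correspondence and Serre duality, whereas the paper stays in graded local cohomology, using the sequence $0\to (f)/J\to R/J\to R/(f)\to 0$ together with the explicit description of $H^{m+1}_\m(R)$ to compute $H^m_\m(R/(f))_\mu$ directly (and, if anything, your explicit justification of the base-change isomorphism via right-exactness of $H^m_\m$ is more careful than the paper's unexplained chain of equivalences).
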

\begin{proof}
As $k$ is algebraically closed, we have 
\begin{align*}
\pi^{-1}(y)=\Proj(\RI\otimes_B k(y)) \subset \PP_{k(y)}^m\simeq \PP_k^m.
\end{align*}
Therefore, the homogeneous coordinate ring of  $\pi_2^{-1}(y)$ is
$$\RI\otimes_Bk(y)\simeq R/J,$$
where $J$ is a satured ideal of $R$ depending on  $y.$  Let $y\in \Y_{m-1}$. As $\dim\pi^{-1}(y)=m-1$, one has
$$\dim (\RI\otimes_Bk(y))=\dim R/J =m.$$
Since $\dim R=m+1$, there  exists a homogeneous polynomial $f$ of degree $d_f$ such that $J=(f)J^\prime$, with $\codim (J')\geq 2.$ Notice that $f$ is exactly the defining equation of unmixed part of $\pi^{-1}(y).$ 
Consider the exact sequence
$$0\longrightarrow (f)/J\longrightarrow R/J\longrightarrow R/(f)\longrightarrow 0$$
which deduces the exact sequence in cohomology
$$0=H_\m^m((f)/J)\longrightarrow H_\m^m(R/J)\longrightarrow H_\m^m(R/(f))\longrightarrow H_\m^{m+1}((f)/J)=0,$$
since  $\codim (J')\geq 2$, hence $(f)/J$ is of dimension at most $m-1.$ It follows from the above exact sequence that
\begin{align}\label{equation3.1}
H_\m^m(\RI\otimes_Bk(y))\simeq H_\m^m(R/J)\simeq H_\m^m(R/(f)).
\end{align}
We consider the following exact sequence
\begin{align*}
\xymatrix{0\ar[r] & R[-d_f] \ar[r]^{\quad \times f} & R \ar[r] & R/(f) \ar[r] & 0}
\end{align*}
which implies the exact sequence in cohomology
\begin{align*}
\xymatrix{0=H_\m^m(R)\ar[r] & H_\m^m(R/(f)) \ar[r] & H_\m^{m+1}(R[-d_f]) \ar[r]^{\quad\times f} & H_\m^{m+1}(R) \ar[r] & 0.}
\end{align*}
In degree $\mu$, one has the following exact sequence
\begin{equation}\label{suite5}
0\longrightarrow H_\m^m(R/(f))_\mu\longrightarrow H_\m^{m+1}(R)_{\mu-d_f}\longrightarrow H_\m^{m+1}(R)_\mu\longrightarrow 0.
\end{equation}
On the other hand,
$$H_\m^{m+1}(R)\simeq (X_0\cdots X_m)^{-1}k[X_0^{-1},\ldots,X_m^{-1}],$$
hence 
$$H_\m^{m+1}(R)_\mu\simeq (R^\ast)_{\mu+m+1}:=\Hom_R(R_{-\mu-m-1},k).$$
It follows that
$H_\m^{m+1}(R)_\mu=0$ for all $\mu>-m-1$ and $H_\m^{m+1}(R)_\mu\neq 0$ for any $\mu \leq -m-1$. It follows from \eqref{suite5} that
\begin{align}\label{equation3.3}
H_\m^m(R/(f))_\mu =\begin{cases}
0&\text{if} \quad \mu>d_f-m-1\\
(R/(f))^\ast_{\mu-d_f+m+1}\neq 0&\text{if}\quad  \mu\leq d_f-m-1.
\end{cases} 
\end{align}

By definition, $M_\mu$ is a graded $B$-module and $\Supp_B (M_\mu)\subset \Proj(B).$ Now let $\point\in \Proj(B)$, we have
\begin{align*}
\point\in \Supp_B(M_\mu)&\iff  M_\mu \otimes_B B_\point\neq 0\\
& \iff M_\mu\otimes_B B_\point\otimes_B (B/\point)\neq 0 \\
&\iff H_\m^m(\RI)_{(\mu,\ast)}\otimes_Bk(\point)\neq 0\\
&\iff H_\m^m(\RI\otimes_Bk(\point))_{(\mu,\ast)}\neq 0.
\end{align*}
In particular, $H_\m^m(\RI\otimes_Bk(\point))\neq 0$, hence $\dim (\RI\otimes_Bk(\point)) =m$ which shows that $\point\in \Y_{m-1}.$ It follows from \eqref{equation3.1} and \eqref{equation3.3} that $\deg(\pi^{-1}(\point))\geq \mu+m+1.$
\end{proof}
 
In particular, if $\mu=-m$,  then the finitely generated $B$-module
$$N=\oplus_{s\geq 0}H_\m^m(I^s)_{sd-m}$$
satisfies $\Supp_B(N) =\Y_{m-1}$ by Proposition~\ref{Proposition3.1}.
Furthermore, we have the following.
\begin{Theo} \label{Theorem3.2}
Let $N$ be the finitely generated $B$-module as above. Then
\begin{enumerate}
\item [\rm (i)] $\dim (N)=1.$
\item [\rm (ii)] $\deg (N)=\sum_{y\in \Y_{m-1}}\binom{\deg(h_y)+m-1}{m}.$
\end{enumerate}
\end{Theo}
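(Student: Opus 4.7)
Part~(i) is immediate from Proposition~\ref{Proposition3.1}: since $\Supp_B(N)=\Y_{m-1}$ is a finite (possibly empty) set of closed points of $\Proj B$, and each such point corresponds to a homogeneous prime $\point\subset B$ with $\dim B/\point=1$, the finitely generated graded $B$-module $N$ has Krull dimension $1$ as soon as $\Y_{m-1}\neq\emptyset$; the empty case is trivial.

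For part~(ii), I would reduce to a local computation at each $y\in\Y_{m-1}$. Since $N$ is a finitely generated graded $B$-module of Krull dimension~$1$ whose support consists of finitely many closed points of $\Proj B$, its Hilbert polynomial is a constant and
\[
\deg(N) \;=\; \sum_{y\in\Y_{m-1}} \length_{B_\point}(N_\point),
\]
where $\point\subset B$ is the homogeneous prime attached to $y$. Because $k$ is algebraically closed we have $B_\point/\point B_\point\cong k$, and the isolation of $y$ in $\Supp_B(N)$ makes $N_\point$ Artinian over $B_\point$, so $\length_{B_\point}(N_\point)=\dim_k N_\point$. Hence it suffices to prove the local identity $\dim_k N_\point=\binom{\deg(h_y)+m-1}{m}$.

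The plan for this identity has three steps. First, flat base change (valid since $B_\point$ is flat over $B$) yields $N_\point\simeq H^m_\m(\RI\otimes_B B_\point)_{(-m,\ast)}$. Second, Lemma~\ref{Lemma1.2} makes the $B$-action explicit near $y$: normalizing $y=(p_0:\cdots:p_n)$ so that $p_i=1$, the generators $T_j-p_jT_i$ of $\point$ map under $\alpha\colon B\to\RI$ to $f_j-p_jf_i=h_y g_j$, which lets one pass from $\RI\otimes_B B_\point$ to the fiber ring $\RI\otimes_B k(y)$, whose $H^m_\m$ was already identified in the proof of Proposition~\ref{Proposition3.1} with $H^m_\m(R/(h_y))$. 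The short exact sequence $0\to R[-\deg h_y]\to R\to R/(h_y)\to 0$ combined with the explicit formula for $H^{m+1}_\m(R)$ then gives
\[
\dim_k H^m_\m(R/(h_y))_{-m} \;=\; \binom{\deg(h_y)+m-1}{m}.
\]
Third, one must identify $\dim_k N_\point$ with this fiber dimension.

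The main obstacle is precisely this last step: one has to verify that $\point B_\point$ annihilates $N_\point$, so that $N_\point$ is a $k$-vector space of dimension equal to that of its fiber $N_\point/\point N_\point$, rather than an Artinian module with extra composition factors inflating the length. I would attack this either directly, using the explicit action of the generators of $\point$ as multiplication by $h_y g_j$ on local cohomology classes in $H^m_\m(I^s)_{sd-m}$, once one has localized away from the other points of $\Y_{m-1}$; or, more conceptually, by invoking cohomology and base change for the projective morphism $\pi\colon\Gamma\to\PP_k^n$, viewing the sheaf associated to $N$ on $\PP^n_k$ as a twist of $R^{m-1}\pi_\ast\mathcal{O}_\Gamma$ and reading the stalks at the isolated support points from $H^{m-1}$ of the corresponding fibers, which is concentrated on the top-dimensional component $V(h_y)$. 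Once the local identity is secured, summing over $y\in\Y_{m-1}$ delivers~(ii).
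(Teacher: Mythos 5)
Your overall route coincides with the paper's: part (i) is obtained exactly as in the paper, from $\Supp_B(N)=\Y_{m-1}$ and $B/\point\simeq k[T_0]$ for the prime $\point$ of each $y\in\Y_{m-1}$; and for part (ii) you use the same associativity formula $\deg(N)=\sum_{y}\length_{B_\point}(N_\point)\cdot\deg(B/\point)$ with $\deg(B/\point)=1$, followed by the same identification of the local contribution with $\dim_k H^m_\m(R/(h_y))_{-m}=\dim_k R_{\deg(h_y)-1}=\binom{\deg(h_y)+m-1}{m}$ via \eqref{equation3.1} and \eqref{equation3.3}.

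The problem is that your proposal stops short at what you yourself name as the main obstacle: showing that $\length_{B_\point}(N_\point)$ actually \emph{equals} $\sum_s\dim_k H^m_\m(\RI\otimes_B k(\point))_{(-m,s)}$ rather than merely dominating the length of the fiber $N_\point/\point N_\point$ --- equivalently, that $\point B_\point$ annihilates $N_\point$ and that $-\otimes_B k(\point)$ commutes with $H^m_\m$ here. You sketch two possible attacks (the explicit action of $T_j-p_jT_i$ as multiplication by $h_yg_j$ on local cohomology classes, or cohomology and base change for $\pi$) but carry out neither; note in particular that base change for $H^m_\m$ is not automatic, since $H^{m+1}_\m(I^s)\simeq H^m_\m(R/I^s)$ in the relevant degrees need not vanish, so the sheaf-theoretic strategy requires genuine work. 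The paper's own proof takes the identical route and disposes of this point inside the displayed chain of equalities computing $\length_{B_\point}(N_\point)$: it inserts the factor $\dim_k(B/\point)_s=1$ and passes from $\dim_k(H^m_\m(\RI)\otimes_B B_\point)_{(-m,s)}$ to $\dim_k H^m_\m(\RI\otimes_B B_\point\otimes_B B/\point)_{(-m,s)}$, which is precisely the assertion you flag as unproven. So you have correctly isolated the crux of the argument, but since you do not close it, the proposal as written is incomplete at exactly that step.
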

\begin{proof}
Let $y =(p_0:p_1:\cdots:p_n)\in \Y_{m-1}.$ Without loss of generality, we can assume that $p_0=1.$ Hence,
$$\point=(T_1-p_1T_0,\ldots,T_n-p_nT_0)\subset B$$
is the defining ideal of $y$. For any $f\in B$, we have
$$f=g_1(T_1-p_1T_0)+\cdots+g_n(T_n-p_nT_0)+ v\; \text{for some}\; v\in k[T_0].$$
It follows that $f+\point =v+\point.$ This implies that $B/\point\simeq k[T_0].$ Therefore,
$$\dim(B/\point)=1\quad \text{for any}\; \point\in \Y_{m-1}$$
and thus,
$$\dim(N)=\max_{\point\in \Supp_B(N)}\dim (B/\point)=1$$
which shows (i). We now prove for item (ii). It was known that
$$HP_N(s)=HF_N(s)=\dim_kN_s=\dim_kH_\m^m(I^s)_{sd-m}$$
for all $s\gg 0$, where $HP_N$ and $HF_N$ is the Hilbert polynomial and the Hilbert function of $N$, respectively. As  $\dim N=1$, the Hilbert polynomial of $N$ is constant which is equal to $\deg(N).$  On the other hand,
$$\deg(N)=\sum_{\dim(B/\point)=1} \length_{B_\point}(N_\point).\deg(B/\point).$$
We proved that $B/\point\simeq k[T_0],$ hence $\dim(B/\point)=1$ and $\deg(B/\point)=1$ for the defining ideal $\point$ of $y\in \Y_{m-1}$. Therefore
$$\deg(N)=\sum_{y\in \Y_{m-1}} \length_{B_\point}(N_\point).$$
As $N_\point$ is an Artinian $B_\point$-module and $\dim_k(B/\point)_s=\dim_k (k[T_0])_s=1$ for any $s\geq 0$, one has
\begin{align*}
\length_{B_\point}(N_\point)&=\dim_k(N\otimes_B B_\point) \\
&=\sum_{s}\dim_k(N\otimes_B B_\point)_s\\
&=\sum_{s}\dim_k(H_\m^m(\RI)\otimes_B B_\point)_{(-m,s)} \\
&=\sum_{s}\dim_k(H_\m^m(\RI)\otimes_B B_\point)_{(-m,s)}.\dim_k(B/\point)_s\\
&=\sum_{s}\dim_k H_\m^m(\RI\otimes_B B_\point\otimes_B B/\point)_{(-m,s)}\\
&=\sum_{s}\dim_k H_\m^m(\RI\otimes_B k(\point))_{(-m,s)}\\
&\stackrel{\eqref{equation3.1}}{=}\dim_kH_\m^m(R/(f))_{-m}\\
&\stackrel{\eqref{equation3.3}}{=}\dim_k(R/(f))_{d_f-1}\\
&=\dim_kR_{d_f-1}\qquad \text{since}\quad \deg(f)=d_f\\
&=\binom{d_f+m-1}{m}.
\end{align*}
It follows that 
$$\deg(N)=\sum_{y\in \Y_{m-1}}\binom{\deg(h_y)+m-1}{m}.$$
\end{proof}
 
 \section{Parameterization $\phi: \PP_k^2  \dashrightarrow \PP_k^3$ of surfaces}
 In this section, we consider  a parameterization $\phi: \PP_k^2  \dashrightarrow \PP_k^3$ of surface defined by four homogeneous polynomials $f_0,\ldots,f_3\in R=k[X_0,X_1,X_2]$ of the same degree $d$ such that $\gcd(f_0,\ldots,f_3)=1$. Denote the homogeneous maximal ideal of $R$ by $\m=(X_0,X_1,X_2)$.

 From now on we assume that $\Bc$ is locally a complete intersection. Under this hypothesis, the module $\K$ in the exact sequence 
  $$0\longrightarrow \K \longrightarrow \SI \longrightarrow \RI \longrightarrow 0$$
 is supported in $\m S$. Hence, $H_\m^i(\K)=0$ for any $ i\geq 1.$ The above exact sequence deduces that
 $$H_\m^i(\SI)\simeq H_\m^i(\RI),\, \forall i\geq 1.$$
Let $B=k[T_0,\ldots,T_3]$ be the homogeneous coordinate ring of $\PP_k^3.$ It follows from Theorem~\ref{Theorem3.2} that the finitely generated $B$-module
$$N:=\oplus_{s\geq 0}H_\m^2(I^s)_{sd-2} = H_\m^2(\RI)_{(-2,\ast)} \simeq H_\m^2(\SI)_{(-2,\ast)}$$
satisfying  $\dim(N)=1$ and
$$\Supp_B(N)=\Y_1 =\{ y\in \PP^3_k\mid \dim \pi^{-1}(y)=1  \}.$$
Furthermore,
$$\sum_{y\in \Y_1}\binom{\deg(h_y)+1}{2} =\deg(N) =\dim_k H_\m^2(I^s)_{sd-2}$$
for $s\geq \reg(N)+1,$ where $\reg(N)$ is the Castelnuove-Mumford regularity of $N$. Thus, it is useful to establish the bounds for $\deg(N)$ and $\reg(N)$.

Let $K_\bullet:= K_\bullet(\ff;R)$ and $Z_\bullet:=Z_\bullet(\ff;R)$  be the Koszul complex and the module of cycles associated to the sequence $\ff:=f_0,\ldots,f_3$ with coefficients in $R$, respectively. Since the ideal $I=(\ff)$ is homogeneous, these modules inherit a natural structure of graded $R$-modules. Let $\mathcal{Z}_\bullet$ be the approximation complex associated to $I$. The approximation complexes  were introduced by Herzog, Simis and Vasconcelos in \cite{HSV82} to study the Rees and symmetric algebras of ideals. By definition $$\mathcal{Z}_q=Z_q[qd]\otimes_R R[T_0,\ldots,T_3](-q)$$
for all $q=0,\ldots, 3$ with $\deg(X_i)=(1,0)$ and $\deg(T_i)=(0,1)$. This complex is of the form
\begin{align*}
\xymatrix{
	(\mathcal{Z}_\bullet):& 0\ar[r] &\mathcal{Z}_3\ar[r] ^{v_3}&\mathcal{Z}_2\ar[r]^{v_2}& \mathcal{Z}_1\ar[r]^{v_1\qquad\qquad }&\mathcal{Z}_0=R[T_0,\ldots,T_3]\ar[r]&0 }
\end{align*}
where $v_1(a_0,a_1,a_2,a_3)=a_0T_0+\cdots+a_3T_3$. As $\Bc$ is locally a complete intersection,  the complex  $(\mathcal{Z}_\bullet)$ is acyclic and is a resolution of $H_0(\mathcal{Z}_\bullet)\simeq \SI$, see \cite[Theorem 4]{Buse-Chardin05}.

\begin{pro} \label{Theorem4.1} 
Assume $\Bc$ is locally a complete intersection.  Then $N$ admits a finite representation of free $B$-modules
\begin{align*}
B(-2)^m\longrightarrow B(-1)^n\longrightarrow N\longrightarrow 0,
\end{align*}
where $ n=\dim_k H_\m^1(R/I)_{d-2}$ and $m=\dim_k H_\m^3(Z_2)_{2d-2}.$
\end{pro}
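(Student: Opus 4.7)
The idea is to read the presentation of $N$ off the spectral sequence of the Čech double complex $\check{C}^\bullet_\m(\mathcal{Z}_\bullet)$, using that, after taking the bidegree $(-2,\ast)$ slice, only two terms of the $E_1$-page survive.

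Since $\Bc$ is locally a complete intersection, the isomorphism $N \simeq H^2_\m(\SI)_{(-2,\ast)}$ recorded just before the proposition reduces the problem to $\SI$, and the approximation complex
$$0 \to \mathcal{Z}_3 \to \mathcal{Z}_2 \to \mathcal{Z}_1 \to \mathcal{Z}_0 \to \SI \to 0$$
is exact, with $\mathcal{Z}_q = Z_q[qd] \otimes_k B(-q)$. Because local cohomology with respect to $\m = (X_0,X_1,X_2) \subset R$ is $R$-linear and commutes with the $k$-flat base change $(-) \otimes_k B$, one gets
$$H^p_\m(\mathcal{Z}_q)_{(-2,\ast)} = H^p_\m(Z_q)_{qd-2}\otimes_k B(-q),$$
a free graded $B$-module of finite rank $\dim_k H^p_\m(Z_q)_{qd-2}$. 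Applying $H^\bullet_\m$ to the approximation complex yields the convergent spectral sequence
$$E_2^{p,q} = H_q\bigl(H^p_\m(\mathcal{Z}_\bullet)\bigr) \Longrightarrow H^{p-q}_\m(\SI).$$

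To pick up $H^2_\m(\SI)_{(-2,\ast)}$ I walk the diagonal $p - q = 2$. Terms with $p \geq 4$ die because $\m$ has three generators; the term $(p,q) = (2,0)$ dies because $H^2_\m(\mathcal{Z}_0) = H^2_\m(R[T_0,\ldots,T_3]) = 0$ (since $R$ is Cohen--Macaulay of dimension $3$). Only $(p,q) = (3,1)$ remains, and an inspection of the differential shifts in this spectral sequence shows that every higher $d_r$ ($r \geq 2$) into or out of $E_r^{3,1}$ has source or target outside the strip $\{0 \leq p, q \leq 3\}$, so $E_2^{3,1} = E_\infty^{3,1}$. Furthermore, the outgoing $d_1 \colon E_1^{3,1} \to E_1^{3,0}$ lands in $H^3_\m(\mathcal{Z}_0)_{(-2,\ast)} = H^3_\m(R)_{-2} \otimes_k B = 0$, because $H^3_\m(R)_t = 0$ for all $t > -3$. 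Consequently
$$H^2_\m(\SI)_{(-2,\ast)} \;\simeq\; \coker\bigl(H^3_\m(\mathcal{Z}_2)_{(-2,\ast)} \to H^3_\m(\mathcal{Z}_1)_{(-2,\ast)}\bigr) \;=\; \coker\bigl(B(-2)^m \to B(-1)^{n'}\bigr),$$
with $m = \dim_k H^3_\m(Z_2)_{2d-2}$ and $n' = \dim_k H^3_\m(Z_1)_{d-2}$.

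It remains to identify $n'$ with $n = \dim_k H^1_\m(R/I)_{d-2}$. Applying $H^\bullet_\m$ to $0 \to Z_1 \to K_1 \to I \to 0$ with $K_1 = R(-d)^4$, and noting that $H^p_\m(K_1) = 0$ for $p \leq 2$ while $H^3_\m(K_1)_{d-2} = (H^3_\m(R)_{-2})^4 = 0$ by the same vanishing, the long exact sequence in degree $d-2$ collapses to $H^3_\m(Z_1)_{d-2} \simeq H^2_\m(I)_{d-2}$. Feeding $0 \to I \to R \to R/I \to 0$ into $H^\bullet_\m$ and using $H^1_\m(R) = H^2_\m(R) = 0$ then gives $H^2_\m(I)_{d-2} \simeq H^1_\m(R/I)_{d-2}$, so $n' = n$. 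The main technical obstacle is the spectral-sequence bookkeeping needed to confirm $E_2 = E_\infty$ at position $(3,1)$; once that and the two degree-specific vanishings above are in place, the presentation of $N$ drops out immediately from the cokernel identification.
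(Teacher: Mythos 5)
Your proposal is correct and follows essentially the same route as the paper: both compute $N\simeq H^2_\m(\SI)_{(-2,\ast)}$ from the hypercohomology spectral sequence of the \v{C}ech double complex on the acyclic approximation complex $\mathcal{Z}_\bullet$, observe that in bidegree $(-2,\ast)$ only the position $(p,q)=(3,1)$ survives on the diagonal $p-q=2$ with no higher differentials, and read off the presentation as $\coker\bigl(H^3_\m(\mathcal{Z}_2)\to H^3_\m(\mathcal{Z}_1)\bigr)_{(-2,\ast)}$ using $H^3_\m(R)_{-2}=0$. The only difference is cosmetic: you spell out the identification $H^3_\m(Z_1)_{d-2}\simeq H^2_\m(I)_{d-2}\simeq H^1_\m(R/I)_{d-2}$ via the two short exact sequences, which the paper states without proof.
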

\begin{proof} 
We  consider the two spectral sequences associated to the double complex $C_\m^\bullet(\ZC_\bullet),$ where $C_\m^\bullet(M)$ denotes the \v{C}ech complex on $M$ relatively to the ideal $\m$. Since $(\ZC_\bullet)$ is acyclic, one of them abuts at step two with:
\begin{equation*} 
_\infty\!^h\textbf{E}_{q}^p= \,_2^h\textbf{E}_{q}^p= \begin{cases}  H_\m^p(\SI)\quad \text{for} \quad & q=0\\ 0 \qquad\qquad \text{for}  \quad & q\neq 0.\end{cases}
\end{equation*}
The other one gives at step one:
$$^{v}_1 \textbf{E}_{q}^p=H_\m^p (\ZC_q)=H_\m^p (Z_q)[qd]\otimes_R R[T_0,\ldots,T_3](-q)=H_\m^p (Z_q)[qd]\otimes_k B(-q).$$
By \cite[Lemma 1]{Buse-Chardin05}, $H_\m^p(Z_q)=0$ for $p=0,1$ and by definition, $Z_3\simeq R[-4d]$ and $Z_0=R$. Therefore, the first page of the vertical spectral sequence has only two nonzero lines
{\footnotesize \begin{align*}
\xymatrix@R=8pt{ 0\ar[r]& H_\m^2(Z_2)[2d]\otimes_k B(-2)\ar[r]& H_\m^2(Z_1)[d]\otimes_k B(-1)\ar[r]&0\\
	H_\m^3(Z_3)[3d]\otimes_k B(-3)\ar@{->}[r]& H_\m^3(Z_2)[2d]\otimes_k B(-2)\ar@{->}[r]& H_\m^3(Z_1)[d]\otimes_k B(-1)\ar@{->}[r]& H_\m^3(Z_0)\otimes_kB.	}
\end{align*}}
In bi-degree $(-2,\ast)$, we have $ H_\m^3(Z_0)_{-2}\otimes_k B=H_\m^3(R)_{-2}\otimes_k B=0.$ Therefore, we obtain the complex $(C_\bullet)$ of free $B$-modules
\begin{align*}
\xymatrix@R=8pt{0\ar[r]& B(-3)^l \ar[r]\ar@{=}[d]& B(-2)^m\ar[r] \ar@{=}[d]& B(-1)^n\ar@{=}[d]\ar[r]& 0.\\ & C_3& C_2& C_1&} 
\end{align*}
Notice that $n=\dim_k H^3_\m (Z_1)_{d-2} = \dim_k H^2_\m (I)_{d-2}= \dim_k H^1_\m (R/I)_{d-2}.$

It remains to show that $H_1(C_\bullet)=N.$  It is easy to see that
$$_\infty\! ^{v} \textbf{E}_{q}^p =\ ^{v}_2 \textbf{E}_{q}^p \quad \text{unless} \quad p=q=3\quad \text{or}\quad p=2, q=1.$$
Therefore,
$$\bigoplus_{p-q=2}\,  _\infty\!^{v} \textbf{E}_{q}^p =\ ^{v}_2 \textbf{E}_{1}^3=H_\m^2(\SI)=\bigoplus_{p-q=2}\ _\infty\!^{h} \textbf{E}_{q}^p,$$ 
in other words,
\begin{align*}
H_1(C_\bullet)= H_\m^2(\SI)_{(-2,\ast)} = H_\m^2(\RI)_{(-2,\ast)} =N.
\end{align*} 
\end{proof}
We now establish a bound for the Castelnuovo-Mumford regularity and the degree of $B$-module $N$ in terms of $n=\dim_k H^1_\m (R/I)_{d-2}$ as follows.
\begin{Cor}\label{Cor4.3}
Suppose $\Bc$ is locally a complete intersection.  Then 
$$\reg(N)\leq n\quad \text{and}\quad \deg(N)\leq \binom{n+2}{3}.$$
\end{Cor}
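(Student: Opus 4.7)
The plan is to read off both bounds from the finite presentation
\[
B(-2)^{m} \xrightarrow{\phi} B(-1)^{n} \longrightarrow N \longrightarrow 0
\]
provided by Proposition~\ref{Theorem4.1}, combined with the fact that $\dim N = 1$ from Theorem~\ref{Theorem3.2}. The presentation already tells us that $N$ is generated in degree~$1$ by $n$ elements with first syzygies in degree~$2$, so $\reg(N)\ge 1$ is forced by the generators and the presentation matrix has linear entries. Since $\dim N = 1$, we have $H^i_\mathfrak{M}(N) = 0$ for $i\ge 2$, where $\mathfrak{M}=(T_0,\ldots,T_3)$, so only $H^0_\mathfrak{M}(N)$ and $H^1_\mathfrak{M}(N)$ can contribute to $\reg(N)$.

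For the regularity bound $\reg(N)\le n$, the natural route is to reuse the double-complex argument underlying Proposition~\ref{Theorem4.1}, but now across \emph{all} $X$-degree strips $(\mu,\ast)$ rather than just $(-2,\ast)$. The same complex $C_\m^\bullet(\ZC_\bullet)$ computes $H^\bullet_\m(\SI)_{(\mu,\ast)}$, and its vertical first page is built from the pieces $H^p_\m(Z_q)[qd]\otimes_k B(-q)$. Invoking the vanishings $H^p_\m(Z_q)=0$ for $p\le 1$ from \cite{Buse-Chardin05}, together with the definition $n=\dim_k H^1_\m(R/I)_{d-2}$ and the explicit description of $H^3_\m(R)$, one controls the $X$-degree range in which nontrivial contributions can survive. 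A careful bookkeeping should then show that $H^0_\mathfrak{M}(N)_s=0$ for $s>n$ and $H^1_\mathfrak{M}(N)_s=0$ for $s\ge n$, yielding $\reg(N)\le n$.

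For the degree bound $\deg(N)\le\binom{n+2}{3}$, once the regularity estimate is in hand, we have $HF_N(s)=HP_N(s)=\deg(N)$ for all $s\ge n$. The suggestive identity $\binom{n+2}{3}=\dim_k B_{n-1}$ then makes the bound natural: analyzing $N_n$ as the quotient of $B_{n-1}^{n}$ by the image of the degree-$n$ part of the linear presentation, and using the fact that the relations are forced by the $1$-dimensionality of $N$ to cut out a copy of dimension at most $\binom{n+2}{3}$, one recovers the desired inequality.

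The main obstacle is the regularity estimate with the sharp constant $n$. The two-term presentation alone pins down only the first two Betti numbers and gives no \emph{a priori} control on the shifts of the higher syzygies of $N$. Extracting the precise value $n$ requires the refined spectral-sequence analysis across all $X$-degree strips together with the vanishing inputs from \cite{Buse-Chardin05}; the degree bound then follows from the regularity bound by a routine Hilbert-function computation.
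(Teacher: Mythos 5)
There is a genuine gap: you assemble the right ingredients (the linear presentation $B(-2)^m\to B(-1)^n\to N\to 0$ from Proposition~\ref{Theorem4.1} and $\dim N=1$ from Theorem~\ref{Theorem3.2}), but you never actually prove either inequality --- both are deferred to ``careful bookkeeping should then show'' and ``one recovers the desired inequality.'' The paper's proof is a direct application of a general theorem: for a graded $B$-module of codimension $3$ in $B=k[T_0,\ldots,T_3]$ admitting a presentation with generators in degree $1$ and relations in degree $2$, the bounds $\reg(N)\leq n$ and $\deg(N)\leq\binom{n+2}{3}$ follow from \cite[Corollaries 2.4 and 3.4]{CFN08}. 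That citation is the entire content of the step you are missing; without invoking such a result (or reproving it), the two-term presentation alone does not control the higher syzygies of $N$, as you yourself observe, and your sketch does not close that gap.

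Moreover, your proposed route to the regularity bound is aimed at the wrong grading. The regularity $\reg(N)$ is governed by the local cohomology modules $H^i_{\mathfrak{M}}(N)$ for $\mathfrak{M}=(T_0,\ldots,T_3)\subset B$, i.e.\ by the $T$-degrees. Re-running the double complex $C_\m^\bullet(\ZC_\bullet)$ over all $X$-degree strips $(\mu,\ast)$ only produces the family of $B$-modules $M_\mu=H_\m^m(\RI)_{(\mu,\ast)}$; it says nothing about $H^i_{\mathfrak{M}}(N)$ for the single module $N=M_{-2}$, because $\m=(X_0,X_1,X_2)$ and $\mathfrak{M}$ live on opposite sides of the bigrading. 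Your degree argument has the same problem: the observation $\binom{n+2}{3}=\dim_k B_{n-1}$ is suggestive, but $N_s$ is a priori a quotient of $(B_{s-1})^{n}$, of dimension $n\binom{s+2}{3}$, and the claim that the relations ``cut out a copy of dimension at most $\binom{n+2}{3}$'' is precisely the assertion to be proved, not a proof. The honest fix is to quote the bounds for modules with linear presentation and prescribed codimension, as the paper does.
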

\begin{proof}
As $\dim B = 4$, hence  $\codim(N)=3$ and by Proposition~\ref{Theorem4.1}, $N$ admits a finite representation
\begin{align*}
B(-2)^m\longrightarrow B(-1)^n\longrightarrow N\longrightarrow 0.
\end{align*}
The corollary follows from \cite[Corollaries 2.4 and 3.4]{CFN08}.
\end{proof}
Theorem~\ref{Theorem1.3} shows  that if $\indeg(I^\sat)<d$, then
$$\sum_{y\in \Y_1}\deg(h_y)< d.$$
Hence, the delicate case is when the ideal $I$ satisfies $\indeg (I^\sat)=\indeg(I)= d.$ In this case, the first author in \cite{QHTran17} established an upper bound  for $n=\dim_k H_\m^1(R/I)_{d-2}$ in terms of $d$ as follows.
\begin{pro}\label{Lemma4.2}
Assume $\Bc$ is locally a complete intersection and $\indeg(I^\sat)=d$. Then
	$$\frac{1}{2}d(d+1)\leq\deg(\Bc)\leq d^2-2d+3$$
and $$d\leq n= \deg(\Bc)-\frac{1}{2}d(d-1)\leq \frac{d(d-3)}{2}+3.$$
\end{pro}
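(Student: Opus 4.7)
The overall plan is to first establish the identity $n = \deg(\Bc) - \tfrac{1}{2}d(d-1)$ by a local cohomology computation, and then to prove matching bounds on $\deg(\Bc)$. The lower bound on $\deg(\Bc)$ follows cleanly from the assumption $\indeg(I^\sat)=d$, while the upper bound requires a careful analysis of the Hilbert-Burch resolution of $I^\sat$.

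\textbf{Step 1 (identity for $n$).} From the short exact sequence $0 \to I^\sat/I \to R/I \to R/I^\sat \to 0$ and the vanishing of the higher local cohomology of the $\m$-torsion module $I^\sat/I$, I would obtain $H^1_\m(R/I) \cong H^1_\m(R/I^\sat)$. Since $\Bc$ is locally a complete intersection of dimension $0$, $R/I^\sat$ is CM of dimension $1$ and the associated coherent sheaf $\widetilde{R/I^\sat}$ is a skyscraper of total length $\deg(\Bc)$; hence for every $t \in \Z$,
$$\dim_k H^1_\m(R/I^\sat)_t = \deg(\Bc) - \dim_k(R/I^\sat)_t.$$
The assumption $\indeg(I^\sat)=d$ yields $(I^\sat)_{d-2}=0$, so $\dim_k(R/I^\sat)_{d-2} = \binom{d}{2} = \tfrac{1}{2}d(d-1)$, which gives the identity.

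\textbf{Step 2 (lower bound).} Applied in degree $d-1$, the same formula gives $\dim_k(R/I^\sat)_{d-1} = \binom{d+1}{2}$. Since the Hilbert function of a $1$-dimensional CM graded module is bounded above by its multiplicity, we obtain $\deg(\Bc) \geq \binom{d+1}{2} = \tfrac{1}{2}d(d+1)$, whence $n \geq d$ by Step 1.

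\textbf{Step 3 (upper bound).} Since $R/I^\sat$ is perfect of codimension $2$, the Hilbert-Burch theorem produces a minimal free resolution
$$0 \to \bigoplus_{j=1}^{r-1} R(-b_j) \to \bigoplus_{i=1}^{r} R(-a_i) \to I^\sat \to 0,$$
from which a standard Hilbert polynomial computation gives $\sum_j b_j = \sum_i a_i$ and $\deg(\Bc) = \tfrac{1}{2}\bigl(\sum_j b_j^2 - \sum_i a_i^2\bigr)$. Using $a_i \geq d$ (from $\indeg(I^\sat)=d$), $b_j \geq d+1$ (minimality), and the fact that at least four of the $a_i$ equal $d$ (since $\dim_k(I^\sat)_d \geq \dim_k I_d = 4$), the optimization in the case $r=4$ is maximized at the syzygy degrees $(d+1,d+1,2d-2)$, giving $\deg(\Bc) \leq d^2-2d+3$. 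For $r \geq 5$, any configuration that combinatorially attains a larger value of $\deg(\Bc)$ would force all four $(r-1)\times(r-1)$ minors of the syzygy matrix corresponding to the degree-$d$ generators to share a nontrivial common factor (extracted from a low-degree row of the matrix), contradicting $\gcd(f_0,\ldots,f_3)=1$. The bound $n \leq \tfrac{d(d-3)}{2}+3$ then follows by Step 1.

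The main obstacle is Step 3: carrying out the combinatorial optimization over Hilbert-Burch resolutions and systematically ruling out the ``bad'' $r \geq 5$ configurations via the coprimality of the $f_i$, which is the technical core of the first author's argument in \cite{QHTran17}.
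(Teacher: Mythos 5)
The paper does not actually prove this proposition: it is quoted from \cite{QHTran17}, so there is no in-text argument to compare against. Judged on its own terms, your Steps 1 and 2 are correct and complete: $H^1_\m(R/I)\cong H^1_\m(R/I^{\sat})$, the formula $\dim_k H^1_\m(R/I^{\sat})_t=\deg(\Bc)-\dim_k(R/I^{\sat})_t$ for the $1$-dimensional Cohen--Macaulay ring $R/I^{\sat}$, and the vanishing $(I^{\sat})_{d-2}=(I^{\sat})_{d-1}=0$ give both the identity $n=\deg(\Bc)-\tfrac{1}{2}d(d-1)$ and the lower bound $\deg(\Bc)\geq\binom{d+1}{2}$. (One small point you should make explicit: you need $\dim_k(I^{\sat})_d\geq 4$, i.e.\ the linear independence of $f_0,\dots,f_3$, which is implicit in the nondegeneracy of the parameterization.)

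The genuine gap is Step 3, and you have correctly located it yourself. The Hilbert--Burch optimization is only verified for $r=4$; for $r\geq 5$ the numerical data alone do \emph{not} stay below $d^2-2d+3$. For instance $(a_1,\dots,a_5)=(d,d,d,d,d+2)$ with $(b_1,\dots,b_4)=(d+1,d+1,d+1,2d-1)$ satisfies $\sum b_j=\sum a_i$ and the classical realizability condition $b_j\geq a_{j+1}+1$ for $d\geq 4$, yet gives $\deg=d^2-d>d^2-2d+3$. Your proposed rescue --- that such configurations force the four degree-$d$ minors to share a common factor --- does work in this example (the fifth row of the Hilbert--Burch matrix is $(0,0,0,\ast)$ with $\ast$ of degree $d-3$, so that factor divides all four degree-$d$ generators, contradicting $\gcd(f_0,\dots,f_3)=1$), but you only assert it in general; the full case analysis over all $r$ and all degree sequences is the entire technical content of the bound and is missing. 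A route that avoids this case analysis altogether: pass to the Artinian reduction $A=k[x,y]/\bar{I}$ by a general linear form, note that $\Delta HF_{R/I^{\sat}}=HF_A$, that $\dim_k\bar{I}_d\geq 4$ with $\gcd(\bar{I}_d)=1$, and use the base-point-free pencil trick to get $\dim_k\bar{I}_{d+j}\geq\min(4+2j,\,d+j+1)$; summing yields $\deg(\Bc)\leq\binom{d+1}{2}+\binom{d-2}{2}=d^2-2d+3$ directly. As written, your proof establishes the identity for $n$ and the lower bounds, but not the upper bounds.
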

 
\section*{Acknowledgments}
 All authors were partially supported by Hue University's Project under grant \#DHH2019-03-114. The authors are grateful to the referee for very carefully reading the manuscript.

\bibliographystyle{vancouver}
\bibliography{bibliothese} 

\begin{thebibliography}{10}

\bibitem{Cox05}
Cox DA.
\newblock What is the role of algebra in applied mathematics?
\newblock Notices Amer Math Soc. 2005;52(10):1193--1198.

\bibitem{CCT17}
Chardin M, Cutkosky SD, Tran QH.
\newblock Fibers of rational maps and Jacobian matrices.
\newblock To appear in the Journal of Algebra. 2019.
\newblock Available from:
  \url{http://www.sciencedirect.com/science/article/pii/S0021869319300894}.

\bibitem{QHTran17}
Tran QH.
\newblock Bound for the number of one-dimensional fibers of a projective
  morphism.
\newblock Journal of Algebra. 2018;494:220 -- 236.
\newblock Available from:
  \url{http://www.sciencedirect.com/science/article/pii/S0021869317305501}.

\bibitem{BBC14}
Botbol N, Bus{\'e} L, Chardin M.
\newblock Fitting ideals and multiple points of surface parameterizations.
\newblock J Algebra. 2014;420:486--508.
\newblock Available from:
  \url{http://dx.doi.org/10.1016/j.jalgebra.2014.07.028}.

\bibitem{Zariski1960}
Zariski O, Samuel P.
\newblock Commutative algebra. {V}ol. {II}.
\newblock Springer-Verlag, New York-Heidelberg; 1975.
\newblock Reprint of the 1960 edition, Graduate Texts in Mathematics, Vol. 29.

\bibitem{Macaulay2}
Grayson DR, Stillman ME. Macaulay2, a software system for research in algebraic
  geometry;.
\newblock Available at \url{http://www.math.uiuc.edu/Macaulay2/}.

\bibitem{Chardin2013}
Chardin M.
\newblock Powers of ideals and the cohomology of stalks and fibers of
  morphisms.
\newblock Algebra Number Theory. 2013;7(1):1--18.
\newblock Available from: \url{https://doi.org/10.2140/ant.2013.7.1}.

\bibitem{HSV82}
Herzog J, Simis A, Vasconcelos WV.
\newblock Approximation complexes of blowing-up rings.
\newblock J Algebra. 1982;74(2):466--493.
\newblock Available from: \url{http://dx.doi.org/10.1016/0021-8693(82)90034-5}.

\bibitem{Buse-Chardin05}
Bus{\'e} L, Chardin M.
\newblock Implicitizing rational hypersurfaces using approximation complexes.
\newblock J Symbolic Comput. 2005;40(4-5):1150--1168.
\newblock Available from: \url{http://dx.doi.org/10.1016/j.jsc.2004.04.005}.

\bibitem{CFN08}
Chardin M, Fall AL, Nagel U.
\newblock Bounds for the {C}astelnuovo-{M}umford regularity of modules.
\newblock Math Z. 2008;258(1):69--80.
\newblock Available from: \url{https://doi.org/10.1007/s00209-007-0157-9}.

\end{thebibliography}
\end{document}